\makeatletter \@addtoreset{equation}{section} \makeatother
\renewcommand\thetable{\thesection.\@arabic\c@table}
\theoremstyle{plain}
\newtheorem{theorem}{Theorem}[section]
\newtheorem{lemma}{Lemma}[section]
\newtheorem{remark}{Remark}[section]
\newcommand{\Z}{\mathbb{Z}}
\newcommand{\N}{\mathbb{N}}
\newcommand{\R}{\mathbb{R}}
\newcommand{\dist}{\operatorname{dist}}
\newcommand{\cB}{\mathcal{B}}
\newcommand{\cA}{\mathcal{A}}
\begin{document}

	\title{H\"{o}lder continuity of Oseledets subspaces for linear cocycles on Banach spaces}

	\author{Chiyi Luo}
	\address{School of Mathematical Sciences, Soochow University\\
		Suzhou 215006, Jiangsu, P.R. China}
	\address{Center for Dynamical Systems and Differential Equations, Soochow University\\
		Suzhou 215006, Jiangsu, P.R. China}
	\email{luochiyi98@gmail.com}
	

	\author{Yun Zhao}
	\address{School of Mathematical Sciences, Soochow University\\
		Suzhou 215006, Jiangsu, P.R. China}
	\address{Center for Dynamical Systems and Differential Equations, Soochow University\\
		Suzhou 215006, Jiangsu, P.R. China}
	\email{zhaoyun@suda.edu.cn}
	
	\thanks{This work is partially supported by NSFC (11790274, 11871361).}
	
	
	\begin{abstract}
		Let $f:X\to X$ be an invertible Lipschitz transformation on a compact metric space $X$. Given a H\"{o}lder continuous invertible operator cocycles on a Banach space and an $f$-invariant ergodic measure, this paper establishes the H\"{o}lder continuity of Oseledets subspaces over a compact set of arbitrarily large measure.  This extends a result in \cite{Simion16} for invertible operator cocycles on a Banach space. Finally, this paper proves the H\"{o}lder continuity in the non-invertible case. 
	\end{abstract}

	\keywords{H\"{o}lder continuity, Oseledets subspaces, multiplicative ergodic theorem}
	
	\footnotetext{2010 {\it Mathematics Subject classification}:
		37A35, 28D20}

	\maketitle

	\section{Introduction }\label{intro}
	The  multiplicative ergodic theorem plays a fundamental role in the modern theory of dynamical systems, which says that the Lyapunov exponents exist almost everywhere with respect to every given invariant measure, see \cite{Pesin07} for more details of Lyapunov exponents. Before Oseledets' work on multiplicative ergodic theorem \cite{Oseledec68}, there are previous work on random multiplication of matrices by Furstenberg and Kesten \cite{FK60}.
	
	Roughly speaking, the multiplicative ergodic theorem generalizes the notion of eigenvalues and eigenvectors for a single matrix $A\in \R^{d\times d}$ to a product of matrices $ A(f^{n-1}(x)\cdots A(f(x))A(x)$, where $A:X \rightarrow GL(d) $ is an invertible matrix valued function on a probability space $(X,\mu)$ and $f: X \rightarrow X$ is a measure-preserving map with respect to $\mu$. Under some mild reasonable assumptions, there exists a finite set of numbers and subspaces of $\mathbb{R}^d$  which are called Lyapunov exponents and Oseledets subspaces respectively, form a decomposition or a filtration of $\mathbb{R}^d$(depending on whether $f$ is invertible or not). These exponents define the corresponding subspaces of vectors having the same exponential growth rate  under the action of the cocycle generated by $A$.
	
	For infinite dimensional dynamical
	systems, 
	Ruelle \cite{Ruelle82} first proved the multiplicative ergodic theorem for
	compact linear operators on a separable Hilbert space by using the operator theory of Hilbert
	spaces, the main difficulty in this case are that the non-compactness of the phase space and the non-invertibility of the transformation.  Man\'{e} \cite{M83} overcame the lack of inner product structure and extended the multiplicative ergodic theorem to compact operators on
	Banach spaces. Later, Thieullen \cite{Thieullen87} obtained the multiplicative ergodic theorem  for bounded linear operators on Banach spaces.  Lian and Lu \cite{Lian10} established the  multiplicative ergodic theorem for strong measurable operator cocycles on separable Banach spaces, see \cite{S90} and \cite{SF91}  for other versions of the  multiplicative ergodic  theorem for random dynamics in infinite dimensional spaces as mentioned in \cite{Lian10}.
	Froyland \textit{et al} \cite{Froyland10} established the multiplicative ergodic theorem in finite dimensional spaces for an arbitrary matrix cocycle over an invertible ergodic measure-preserving
	transformation of a probability space. Later, the authors in \cite{Froyland13} extended it to a continuous semi-invertible operator on Banach spaces, and Tokman \textit{et al} \cite{Quas12} set up  the  multiplicative ergodic theorem  for a strong measurable semi-invertible operator on Banach spaces.
	
	The dependence of the exponents and the corresponding Oseledets subspaces on the orbit is usually measurable. The stronger regularity of the dependence has been investigated. Brin \cite{Brin01}  studied the special case of a partially hyperbolic  $C^{1+\varepsilon}$ diffeomorphism on a compact manifold. To be more precisely, for every ergodic measure he proved that the subspace given by the direct sum of the Oseledets subspaces corresponding to strictly negative Lyapunov exponents depend H\"{o}lder continuously on the chosen orbit (see also \cite[Chapter 5]{Pesin07}). If $f: X \rightarrow X$ is a Lipschitz map on a compact metric space and $A:X \rightarrow GL(d) $ is H\"{o}lder continuous, for every ergodic measure Araujo \textit{et al} \cite{Simion16}  established the H\"{o}lder continuity of the Oseledets subspaces over a compact set of arbitrarily large measure. Recently, Dragi\v{c}evi\'{c} \textit{et al} \cite{Froyland18} proved the same result in the setting of possibly non-invertible cocycles, which,  in addition, may take values in the space of compact operators on a Hilbert space.

	Let $f: X \rightarrow X$ be a Lipschitz map on a compact metric space $X$, for an invertible operator cocycles $\cA(x,n)$ on Banach spaces and an ergodic measure we show that the Oseledets subspaces depend H\"{o}lder continuously on a compact set of measure arbitrarily close to 1. In Section \ref{PM}, we recall some basic concepts and the multiplicative ergodic theorem for cocycles  on Banach spaces which was proved by Thieullen \cite{Thieullen87}, we also present the statements of the main results here.  In Section \ref{proof}, we give the detailed proofs of the main result. For an ergodic measure, using the Lyapunov norm to constructing a regular set which is compact and the measure arbitrarily close to 1, we estimate the distance of the Oseledets subspaces, which depend H\"{o}lder continuously on points of the regular set. Finally, we prove the H\"{o}lder continuity of Oseledets subspaces in the non-invertible case in Section \ref{non-inert}.

	\section{Preliminaries and statement of the main result}\label{PM}
	Throughout this paper, unless otherwise specified,  let $f$ be a homeomorphism on a compact metric space $(X,d)$, and let $\mu$ be an $f$-invariant ergodic Borel probability measure, and $\cB$ denotes an infinite dimensional Banach space with norm $ || \cdot ||$.
	
	\subsection{Linear cocycles} \label{LC}
	Let $GL(\cB)$ denote the group of invertible bounded linear operators on $ \cB $, the metric $\rho$ on $GL(\cB)$ is defined as follows
	\begin{eqnarray}\label{comp-metric}
\rho(A,B)=||A-B||+||A^{-1}-B^{-1}||,
\end{eqnarray}
	then $(GL(\cB),\rho) $ is a complete metric space.
	
	Let $A:X \rightarrow GL(\cB)$ be a continuous operator valued function, we have that  $\|A\|:=\sup_{x\in X}||A(x)||<\infty$ since $X$ is compact. Consequently, we have that $\log||A||\in L_1(\mu)$. A map $\cA:X\times \Z \rightarrow GL(\cB)$ is called a \textit{linear cocycle} over $ f $ generated by $A$, if
	\begin{equation*}
		\cA(x,n)=\left\{
		\begin{aligned}
			&A(f^{n-1}(x))\circ \cdots \circ A(f(x))\circ A(x), &n > 0\\
			&Id,   &n=0 \\
			&A(f^{-n}(x))^{-1} \circ \cdots \circ A(f^{-1}(x))^{-1},  &n<0
		\end{aligned}
		\right.
	\end{equation*}
	for every $x\in X$. Clearly, $\cA(x,n+k)=\cA(f^{k}(x),n)\circ \cA(x,k)$. Let
	$$ \lambda(x):=\lim_{n\rightarrow \infty} \frac{1}{n} \log ||\cA(x,n)||
	$$
	whenever the limit exists.
	
	Define the index of compactness (or Kuratowski measure of non-compactness) of a continuous linear operator $T:\cB \rightarrow \cB $ as the number
	$$
	||T||_{\alpha}:=\inf \{k>0: T(S_{\cB}) \ \text{can be covered by finitely many balls of radius } k\},
	$$
	where $S_{\cB}$ is the unit ball of $\cB$. It is easy to show that $||T||_{\alpha}\leq ||T|| $ and $\|T\circ S\|_{\alpha}\leq \|T\|_{\alpha}\|S\|_{\alpha}$ for every $T,S\in GL(\cB)$.
	Given a  linear cocycle $\cA:X\times \Z \rightarrow GL(\cB)$ over $ f $ generated by $A:X \rightarrow GL(\cB)$,
	the index of compactness at point $ x $ is defined as
	$$
	\alpha(x):=\lim_{n\rightarrow \infty} \frac{1}{n} \log ||\cA(x,n)||_{\alpha}
	$$
	whenever the limit exists.
	
	By the subadditive ergodic theorem, $\alpha(x)$ and $\lambda(x)$ is well defined for $\mu$-almost every $x$, and the function $x \mapsto \alpha(x)$ and $ x \mapsto \lambda(x)$ are measurable and $f$-invariant. Since $\mu$ is an $f$-invariant ergodic measure, $\alpha(x)$ and $\lambda(x)$ are constants for $\mu$-almost every $x$, denote the constants by $\alpha(A,\mu)$ and $\lambda(A,\mu)$ respectively. Note that $ \alpha(A,\mu) \leq \lambda(A,\mu)$, and we call $A$ is \textit{quasi-compact} if  $ \alpha(A,\mu)< \lambda(A,\mu)$.
	
	The following Oseledets multiplicative ergodic theorem of continuous cocycles on Banach spaces was proved by P. Thieullen \cite{Thieullen87}.
	\begin{theorem}[Multiplicative ergodic theorem]\label{EMT}
		Let $f$ be a homeomorphism on a compact metric space $(X,d)$, and let $\mu$ be an $f$-invariant ergodic Borel probability measure. Given a linear cocycle $\cA$ over $ f $ generated by the quasi-compact and continuous operator valued function $A:X \rightarrow GL(\cB)$, where $(\cB,||\cdot||)$ is a Banach space. Then there exists a $f$-invariant subset $X_0\subset X$ of  full $\mu$-measure such that  one of the following cases hold:
		
		{\bf Case (1)}: There exist  finite  numbers
		$$\lambda_{1}>\lambda_{2}>\cdots>\lambda_{k}$$
		with $\lambda_{1}=\lambda(A,\mu)$ and $\lambda_{k}>\alpha(A,\mu)$, and  for every $x\in X_0$ there is a splitting
		$$\cB=E_{1}(x) \oplus E_{2}(x) \oplus \cdots \oplus E_{k}(x) \oplus F(x) $$
		such that
		\begin{enumerate}
			\item[(a)] for each $i=\{1,\cdots,k\}$, $ \dim E_{i}(x)=m_i $ is finite and constant. Moreover, $A(x)E_{i}(x)=E_{i}(f(x))$, and for any $v\in E_{i}(x)\setminus\{0\}$ we have
			$$ \lambda_{i}=
			\lim_{n\rightarrow \infty} \frac{1}{n}\log ||\cA(x,n)v||=
			-\lim_{n\rightarrow \infty} \frac{1}{n}\log ||\cA(x,-n)v||;
			$$
			\item[(b)] the distribution $ F(x) $ is closed and finite-codimensional, satisfies $A(x)F(x)=F(f(x))$ and
			$$ \alpha(A,\mu)>\limsup_{n\rightarrow \infty} \frac{1}{n}\log ||\cA(x,n)|_{F(x)}||; $$
			\item[(c)] the maps $x\mapsto E_{i}(x), x\mapsto F(x)$ are measurable;
			\item[(d)] writing $\pi_{i}(x)$ for the projection of $\cB $ onto $E_{i}(x)\,(i=1,2,\cdots,k)$ via the splitting at $ x $, we have
			$$\lim_{n\rightarrow \pm\infty} \frac{1}{n}\log ||\pi_{i}(f^{n}(x))||=0.$$
		\end{enumerate}
		
		{\bf Case (2)}: There exist  infinite numbers
		$$\lambda_{1}>\lambda_{2}>\cdots>\alpha(A,\mu)$$
		with $\lambda_{1}=\lambda(A,\mu)$ such that the following properties hold: for each $x\in X_0$ and each
		positive integer $k\in \N$ there is a splitting
		$$\cB=E_{1}(x) \oplus E_{2}(x) \oplus \cdots \oplus E_{k}(x) \oplus F_{k+1}(x) $$
		such that
		\begin{enumerate}
			\item[(a)] for each $i=\{1,\cdots,k\}$, $ \dim E_{i}(x)=m_i $ is finite and constant for $\mu$-almost every $x$. Moreover, $A(x)E_{i}(x)=E_{i}(f(x))$ and for every $v\in E_{i}(x)\setminus\{0\}$ we have
			$$ \lambda_{i}=
			\lim_{n\rightarrow \infty} \frac{1}{n}\log ||\cA(x,n)v||=
			-\lim_{n\rightarrow \infty} \frac{1}{n}\log ||\cA(x,-n)v||;
			$$
			\item[(b)] the distribution $F_{k+1}(x)$ is closed, finite-codimensional and $A(x)F_{k+1}(x)=F_{k+1}(f(x))$ and
			$$ \lambda_{k+1}>\limsup_{n\rightarrow \infty} \frac{1}{n}\log ||\cA(x,n)|_{F_{k+1}(x)}||; $$
			\item[(c)] the maps $x\mapsto E_{i}(x), x\mapsto F_{k+1}(x)$ are measurable;
			\item[(d)] writing $\pi_{i}(x)$ for the projection of $\cB $ onto $ E_{i}(x) $ via the splitting at $ x $, we have
			$$\lim_{n\rightarrow \pm\infty} \frac{1}{n}\log ||\pi_{i}(f^{n}(x))||=0.$$
		\end{enumerate}
	\end{theorem}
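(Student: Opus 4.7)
The plan is to build the Oseledets splitting iteratively, using the subadditive ergodic theorem to identify the exponents and the quasi-compactness gap $\alpha(A,\mu)<\lambda(A,\mu)$ to force the fast subspaces to be finite-dimensional. First I would apply Kingman's subadditive ergodic theorem to $n\mapsto\log\|\cA(x,n)\|$ and $n\mapsto\log\|\cA(x,n)\|_\alpha$, and to analogous subadditive cocycles constructed from exterior powers or singular-value-type products, in order to extract the exponents $\lambda_1>\lambda_2>\cdots$ as $\mu$-almost everywhere constants. The compactness of $X$ and continuity of $A$ guarantee integrability of all these cocycles, so Kingman applies directly.

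Next I would introduce the filtration
\[
V_i(x) = \Bigl\{ v\in\cB : \limsup_{n\to\infty}\tfrac{1}{n}\log\|\cA(x,n)v\| \le \lambda_i \Bigr\},\qquad V_1(x)\supset V_2(x)\supset\cdots,
\]
each $V_i(x)$ being closed and satisfying $A(x)V_i(x)=V_i(f(x))$ by construction. The heart of the argument is to show that $\dim\bigl(V_i(x)/V_{i+1}(x)\bigr)<\infty$ whenever $\lambda_i>\alpha(A,\mu)$: an infinite-dimensional bounded linearly independent family in $V_i/V_{i+1}$ would, under $\cA(x,n)$, refuse to be covered by finitely many balls of radius $o(e^{n\lambda_i})$, contradicting the submultiplicative estimate $\|\cA(x,n)\|_\alpha\approx e^{n\alpha(A,\mu)}$ together with $\lambda_i>\alpha(A,\mu)$. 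In the invertible setting I would then define $E_i(x)\subset V_i(x)$ as the vectors whose backward orbit contracts at the sharp rate $\lambda_i$; this yields a finite-dimensional $A$-invariant complement of $V_{i+1}(x)$ inside $V_i(x)$, and the splitting is assembled by direct sum. The remainder $F(x)$ in Case (1), or $F_{k+1}(x)$ in Case (2), is simply the tail $V_{k+1}(x)$, which automatically satisfies the required growth bound. Measurability of $E_i$ and $F$ follows from their $\limsup$ characterizations, and temperedness of the projections in item (d) is a consequence of Birkhoff's theorem applied to the almost everywhere finite function $\log\|\pi_i(\cdot)\|$.

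The main obstacle is the finite-dimensionality of the quotients $V_i/V_{i+1}$ and the control of the algebraic projections $\pi_i$ in the Banach setting. Without orthogonal projections one must build $\pi_i$ dynamically and bound its norm via a telescoping series along the orbit that converges thanks to the spectral gap $\lambda_i-\lambda_{i+1}>0$; this is where a Banach-space argument diverges from the earlier Hilbert-space proof of Ruelle and instead follows Ma\~n\'e's strategy combined with the use of the Kuratowski index $\|\cdot\|_\alpha$. A secondary delicate point is the dichotomy between Cases (1) and (2): one must argue, via a diagonal/exhaustion procedure in $k$, that either the induction terminates at a finite step with $\lambda_k>\alpha(A,\mu)\ge\lambda_{k+1}$ (Case 1) or it produces an infinite sequence $\lambda_i\searrow\alpha(A,\mu)$ (Case 2), and that in the latter case the putative subspaces $E_i$ and residues $F_{k+1}$ are all consistent across different truncation levels $k$.
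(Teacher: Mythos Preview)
The paper does not prove Theorem~\ref{EMT} at all: it is quoted verbatim from Thieullen~\cite{Thieullen87} and used as a black box for the rest of the paper. So there is no ``paper's own proof'' to compare against; your sketch is an outline of how one might reprove Thieullen's theorem, which is outside the scope of what the paper attempts.

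That said, two points in your outline would not go through as written. First, the appeal to ``exterior powers or singular-value-type products'' is problematic on a general Banach space: there is no canonical exterior algebra with good norm behaviour, and Thieullen's actual argument avoids this by working instead with the Kuratowski index and with restrictions to finite-codimensional invariant subspaces, extracting the exponents one at a time rather than all at once via a volume cocycle. Second, the claim that temperedness of $\log\|\pi_i(\cdot)\|$ in item~(d) follows from Birkhoff's theorem applied to ``the almost everywhere finite function $\log\|\pi_i(\cdot)\|$'' has a gap: a.e.\ finiteness does not give integrability, and without $\log\|\pi_i\|\in L^1(\mu)$ Birkhoff yields nothing about the limit being zero. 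One needs a dynamical argument exploiting the spectral gap $\lambda_i-\lambda_{i+1}$; indeed the present paper itself remarks (after Lemma~\ref{L3}) that~(d) can be recovered from the Dragi\v{c}evi\'{c}--Froyland lemma by induction, which is precisely such an argument. Your identification of the finite-dimensionality of $V_i/V_{i+1}$ via the inequality $\lambda_i>\alpha(A,\mu)$ as the crux is, however, exactly right and is the essence of the quasi-compactness hypothesis.
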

	The number $\lambda_{i}$ in the above theorem is called the  \textit{$i$-th Lyapunov exponents} of the cocycle $\cA$ with respect to $\mu$ and  $m_i$ are called multiplicities of
	$\lambda_{i}$ for every $i$. Moreover, the splitting is called the \textit{Oseledets splitting} and $E_{i}(x)$ is called the \textit{Oseledets subspaces.}
	
	\subsection{Gaps and distance between closed linear subspaces}
	We gather in this subsection some  facts that are relevant to Banach spaces. The definitions of gap and distance are taken from Kato \cite{Kato95} (see also \cite{Lian10} and \cite{Young17}).
	
	Let $ E $ and $ F $ be two non-trivial closed linear subspaces of the Banach space $\cB$, and let $S_{E}$ denote the unit ball of $E$. Put
	\begin{eqnarray*}
		\delta(E,F)=\sup_{v\in S_{E}} \dist(v,F)=\inf\{a: \dist(v,F)\leq a||v||\  \text{for any} \ v \in E\setminus\{0\}\}
\end{eqnarray*}
where $\dist(v,F)=\inf_{u\in F}||v-u||$,
and define the gap between $E$ and $F$ as follows
\begin{eqnarray*}
		\hat{\delta}(E,F)=\max\{\delta(E,F),\delta(F,E)\}.
	\end{eqnarray*}
	If $\cB$ is a Hilbert space, then $\hat{\delta}$ is a metric and coincides with the operator norm of the difference between orthogonal projections. However, in the case that $\cB$ is a  Banach space, $\hat{\delta}$ is not a metric since it does not satisfies the triangle inequality in general \cite{Kato95}.
	
	The topology on the set of closed linear subspaces of the Banach space $\cB$ is the  metric topology defined by the Hausdorff distance $ \hat{d} $ between unit spheres
	\begin{align*}
		\hat{d}(E,F)=\max\{\dist(E,F),\dist(F,E)\}
	\end{align*}
    where $\dist(E,F)=\sup_{v\in S_{E}} \dist(v,S_{F})$.
	The gap $\hat{\delta}$ and the distance $ \hat{d} $ are related by the following inequality (see \cite{Kato95}):
	\begin{equation}\label{eq:gap}
		\hat{\delta}(E,F) \leq \hat{d}(E,F) \leq 2\hat{\delta}(E,F).
	\end{equation}
	Hence, in the following we will work with $ \hat{\delta}(E,F) $ since it is more convenient.
	
	The following lemma gives conditions under which complementation persists, see \cite[Lemma 3.3]{Young17} for detailed proofs.
	\begin{lemma}\label{lem Yong}
Assume that $E$ is a finite dimensional subspace of $\cB$,  $F$ is a closed subspace of $\cB$ and $\cB=E\oplus F$. Let $\pi_{E//F}$ be the projection operator $E\oplus F\rightarrow E$. If ${E}'$ is a finite dimensional subspace of $\cB$ such that $\hat{d}(E,{E}')\leq ||\pi_{E//F} ||^{-1}$, then $\cB={E}' \oplus F$.
	\end{lemma}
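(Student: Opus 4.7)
The plan is to show that the restriction $T := \pi_{E//F}|_{E'} : E' \to E$ is a linear isomorphism; from this, $\cB = E' \oplus F$ follows immediately. Indeed, $E' \cap F = \ker T = \{0\}$, and for any $w = e + f \in E \oplus F$, choosing $v \in E'$ with $T(v) = e$ gives $v - e \in F$, whence $w = v + (f + e - v) \in E' + F$.

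For the injectivity of $T$, suppose $v \in E' \cap F$ with $\|v\| = 1$. For each $u \in S_E$, since $\pi_{E//F}$ restricts to the identity on $E$ and vanishes on $F$, the identity $\pi_{E//F}(u - v) = u$ yields
\begin{equation*}
1 = \|u\| \leq \|\pi_{E//F}\| \cdot \|u - v\|,
\end{equation*}
so $\|u - v\| \geq \|\pi_{E//F}\|^{-1}$. Taking the infimum over $u \in S_E$ gives $\dist(v, S_E) \geq \|\pi_{E//F}\|^{-1}$. Since $v \in S_{E'}$, however, $\dist(v, S_E) \leq \dist(E', E) \leq \hat{d}(E, E') \leq \|\pi_{E//F}\|^{-1}$. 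An $\varepsilon$-approximation argument inside the definition of $\dist(E', E)$, or the compactness of $S_E$ together with a careful equality analysis, produces the contradiction that forces $E' \cap F = \{0\}$.

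For the surjectivity of $T$, since $T$ is injective and both $E, E'$ are finite-dimensional, it suffices to establish $\dim E' \geq \dim E$. I would invoke the classical gap-theoretic fact from Kato's perturbation theory that $\hat{\delta}(E, E') < 1$ implies $\dim E = \dim E'$ for closed subspaces; in the present setting this applies because $\hat{\delta}(E, E') \leq \hat{d}(E, E') \leq \|\pi_{E//F}\|^{-1} \leq 1$, using that any nontrivial projection has norm at least $1$. Alternatively, one can verify directly that $E' + F$ is closed (being the sum of a finite-dimensional subspace and a closed subspace, via the quotient map $\cB \to \cB/F$) and dense in $\cB$ by an iterative approximation scheme: for each $e \in E$, one uses the gap bound to pick $v_1 \in E'$ close to $e$, decomposes the residual $e - v_1 = e_1 + f_1$ in $E \oplus F$, and iterates to build a sequence in $E' + F$ converging to $e$.

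The principal obstacle is the borderline case $\hat{d}(E, E') = \|\pi_{E//F}\|^{-1}$: in both the injectivity and the surjectivity (or density) step, the estimates have zero room to spare, and the strict inequality needed for the contradiction or for geometric contraction of the iteration must be extracted from the hypothesis via non-attainment of suprema in the definitions of operator norm and Hausdorff distance, or via an $\varepsilon$-cushion obtained by replacing $\hat{d}(E, E')$ with a quantity strictly less than $\|\pi_{E//F}\|^{-1}$ before passing to the limit.
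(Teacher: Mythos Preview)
The paper does not prove this lemma at all; it simply refers the reader to \cite[Lemma~3.3]{Young17}. So there is no in-paper argument to compare against. Your overall strategy---show that $\pi_{E//F}|_{E'}:E'\to E$ is a linear isomorphism, obtaining injectivity from the gap bound and surjectivity from Kato's equal-dimension theorem---is exactly the standard route, and it goes through cleanly once the hypothesis is the \emph{strict} inequality $\hat d(E,E')<\|\pi_{E//F}\|^{-1}$.

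Your hedging about the borderline case, however, is not just caution: the lemma is \emph{false} as stated with ``$\leq$''. Take $\cB=\mathbb{R}^2$ with the $\ell^\infty$ norm, $E=\operatorname{span}\{e_1\}$, and $F=E'=\operatorname{span}\{e_2\}$. Then $\pi_{E//F}(x,y)=(x,0)$ has norm $1$, and since $\|e_1\pm e_2\|_\infty=1$ one checks $\hat d(E,E')=1=\|\pi_{E//F}\|^{-1}$, yet $E'\oplus F=F\neq\cB$. So the ``non-attainment of suprema'' and ``$\varepsilon$-cushion'' rescues you propose cannot work in general; there is no proof of the statement as written. This is almost certainly a transcription slip from \cite{Young17}, where the inequality is strict. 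The only place the paper invokes the lemma (just before Lemma~\ref{graph}) can easily be arranged to satisfy the strict inequality by shrinking $\delta_1$, so the defect is harmless for the paper's purposes---but you should state and prove the strict-inequality version rather than try to salvage the equality case.
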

	
	\subsection{Statement of the main result}
For a  cocycle generated by an invertible bounded linear operator on the Banach space $\cB$ and an ergodic measure, this paper proves that the corresponding  Oseledets subspaces varies continuously on a compact set of arbitrarily large measure. More precisely,   if $ f: X\rightarrow X $ is bi-Lipschitz on a compact metric space and $ A:X \rightarrow GL(\cB) $ is H\"{o}lder continuous, then the Oseledets subspaces $E_{i}(x), F_{i}(x)$ and $F(x)$ are  H\"{o}lder continuous on a compact set of  measure close to 1. This extends the main results in \cite{Simion16} for linear cocycles on a Banach space. Dragi\v{c}evi\'{c} \textit{et al} \cite{Froyland18} also proved the same result for  possibly non-invertible cocycles on $\R^{d}$ as well as compact operator cocycles on Hilbert spaces.

	\begin{theorem}\label{main}
		Let $f:X\rightarrow X$ be a bi-Lipschitz homeomorphism on a compact metric space $X$ and $\mu$ an ergodic Borel probability measure on $X$, and let $\cA$ be a linear cocycle over $ f $ generated by the $\nu$-H\"{o}lder continuous function $A:X \rightarrow GL(\cB)$.
Let $\lambda_{1}>\lambda_{2}>\cdots>\lambda_{k}>\lambda_{k+1}$ denote the distinct $k+1$ Lyapunov exponents, corresponding to the splitting $\cB=E_{1}(x) \oplus \cdots \oplus E_{k}(x) \oplus F_{k+1}(x) $ defined for $ \mu $-almost every $x\in X$. Then, for every $\gamma>0$, there exist a compact subset $\Lambda_{\gamma}$ of $X$ with $ \mu(\Lambda_{\gamma})>1-\gamma$, and constants $C=C(\Lambda_{\gamma})>0$, $\omega_i=\omega_i(\lambda_{1},\cdots,\lambda_{k+1})<1, i=1,\cdots,k+1$ and $\delta=\delta(\gamma,\Lambda_{\gamma},\lambda_{1},\cdots,\lambda_{k+1})$ such that for all $x,y\in \Lambda_{\gamma}$ with $d(x,y)<\delta$, we have that for $i=1,\cdots,k$
		$$\hat{d}(E_i(x),E_i(y))\leq C d(x,y)^{\nu\omega_i}\ \text{and} \ \  \hat{d}(F_{k+1}(x),F_{k+1}(y))\leq C d(x,y)^{\nu\omega_{k+1}}.$$
		\end{theorem}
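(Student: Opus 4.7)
My plan is to extend the Pesin-block argument of Brin \cite{Brin01} and Araujo et al.\ \cite{Simion16} to the Banach cocycle setting of Theorem~\ref{EMT}. Fix a small auxiliary $\epsilon > 0$, strictly less than half of every spectral gap $\lambda_i - \lambda_{i+1}$. By combining Lusin's and Egorov's theorems with the pointwise convergences in Theorem~\ref{EMT} and working with an adapted Lyapunov norm, I would first produce a compact set $\Lambda_\gamma \subset X_0$ with $\mu(\Lambda_\gamma) > 1 - \gamma$ and a constant $C = C(\Lambda_\gamma, \epsilon) \geq 1$ such that, uniformly for $x \in \Lambda_\gamma$, the projections satisfy $\max_i \|\pi_i^x\|, \|\pi_{F_{k+1}}^x\| \leq C$, the two-sided Pesin inequality
\[
C^{-1} e^{(\lambda_i - \epsilon)|n|} \|v\| \leq \|\cA(x, n)v\| \leq C e^{(\lambda_i + \epsilon)|n|} \|v\|
\]
holds for $v \in E_i(x)$ and $n \in \Z$ (with the $\pm\epsilon$ terms flipped appropriately for backward iterates), and $\|\cA(x,n)v\| \leq C e^{(\lambda_{k+1}-\epsilon) n}\|v\|$ for $v \in F_{k+1}(x)$ and $n \geq 0$; invariance $\cA F_{k+1} = F_{k+1}$ then converts the last bound into the backward lower estimate $\|\cA(x,-n)v\| \geq C^{-1} e^{-(\lambda_{k+1} - \epsilon)n}\|v\|$ on $F_{k+1}$. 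A standard telescoping expansion combined with the $\nu$-H\"older continuity of $A$ and the bi-Lipschitz constant $L$ of $f^{\pm 1}$ further yields the cocycle-perturbation estimate
\[
\|\cA(x,n) - \cA(y,n)\| \leq M^{|n|} d(x,y)^\nu
\]
for some $M \geq 1$ determined by $\|A\|$, $\|A^{-1}\|$, $L^\nu$, and the H\"older constant of $A$.

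For the main step, fix $x, y \in \Lambda_\gamma$ with $d(x,y) < \delta$ (to be chosen), fix $i \in \{1,\ldots,k\}$, and let $v \in E_i(x)$ be a unit vector. Decompose $v = \sum_{j=1}^k \pi_j^y(v) + \pi_{F_{k+1}}^y(v)$ through the $y$-splitting, and for each $j \neq i$ (as well as for the tail component) bound the norm of that summand by iterating in the direction that separates its own growth rate from $\lambda_i$: forward for $j < i$, backward for $j > i$ and for the $F_{k+1}$-component. Applying $\pi_\bullet^{f^{\pm n}y}$ to the identity $\cA(y,\pm n)v = \cA(x,\pm n)v + [\cA(y,\pm n) - \cA(x,\pm n)]v$ and combining the Pesin and perturbation bounds above gives
\[
\|\pi_j^y(v)\| \leq C' \bigl( e^{-a_{ij} n} + M^n e^{b_{ij} n} d(x,y)^\nu \bigr),
\]
where $a_{ij} > 0$ is proportional to the relevant spectral gap $|\lambda_i - \lambda_j|$ (or $|\lambda_i - \lambda_{k+1}|$). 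Optimizing $n \sim \nu \log(1/d(x,y))/(a_{ij} + b_{ij} + \log M)$ to balance the two terms yields $\|\pi_j^y(v)\| \leq C'' d(x,y)^{\nu \omega_{ij}}$ with $\omega_{ij} = a_{ij}/(a_{ij} + b_{ij} + \log M) \in (0,1)$, the strict inequality being forced by the fact that $M \geq \max(\|A\|,\|A^{-1}\|)$ makes $\log M$ dominate every $|\lambda_j|$. Setting $\omega_i := \min_j \omega_{ij}$ and summing the components produces $\dist(v, E_i(y)) \leq \|v - \pi_i^y(v)\| \leq C''' d(x,y)^{\nu \omega_i}$, so $\delta(E_i(x), E_i(y)) \leq C''' d(x,y)^{\nu \omega_i}$; exchanging $x \leftrightarrow y$ gives the reverse inequality, and \eqref{eq:gap} converts the resulting gap estimate into the desired Hausdorff bound. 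The case of $F_{k+1}$ is handled by the same template applied to a unit vector $v \in F_{k+1}(x)$: here forward iteration suffices for every $\pi_j^y$-component ($j \leq k$), since $\lambda_{k+1} - \epsilon < \lambda_j$ makes each of these components grow strictly faster than $v$ itself, and the same balancing produces the exponent $\omega_{k+1}$.

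The principal obstacle is the asymmetric control of the tail $F_{k+1}$: Theorem~\ref{EMT} provides only an upper bound on $\|\cA|_{F_{k+1}}\|$ in the forward direction and only in the form of a limsup, so the backward lower bound needed for the $F_{k+1}$-component of $v \in E_i(x)$ must be extracted from invariance $\cA F_{k+1} = F_{k+1}$ together with the forward upper bound applied along the orbit $f^{-n}x$; this forces $\Lambda_\gamma$ to be built through an adapted Lyapunov norm so that the uniform estimates on $F_{k+1}$ are preserved by iterates of the cocycle rather than merely holding at the initial point. A secondary technical issue is to keep every exponent $\omega_i$ strictly less than $1$: this requires $\epsilon$ to be chosen small relative to every spectral gap and to $\log M$, and that choice in turn fixes the admissible $\delta$ through the requirement that the optimal balancing $n$ produced above be positive and large enough to render the Pesin inequalities effective.
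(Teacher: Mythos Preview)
Your approach is correct and takes a genuinely different route from the paper's. Both arguments begin identically: construct a Pesin block $\Lambda_\gamma$ on which the Oseledets growth rates and projection norms are uniformly controlled via tempered functions (the paper's Theorem~\ref{T2} and Lemma~\ref{norm}), and establish the cocycle-perturbation bound $\|\cA(x,n)-\cA(y,n)\|\leq a^{|n|}d(x,y)^\nu$ (the paper's Lemma~\ref{L4}). The divergence is in the final step. The paper works with the coarse two-block splitting $\cB=E_i^+(x)\oplus E_i^-(x)$, proves an abstract two-subspace cone lemma (Lemma~\ref{L5}), applies it once in each time direction to obtain H\"older continuity of $x\mapsto E_i^\pm(x)$ (Lemmas~\ref{H-} and~\ref{H+}), and then extracts $E_i$ itself by a graph-transform argument: writing $E_i^+(y)$ as the graph of $L_{x,y}:E_i^+(x)\to E_i^-(x)$ (Lemma~\ref{graph}), pulling $E_i(y)$ back into $E_i^+(x)$ via $\Phi_{x,y}^{-1}$, and reapplying the cone lemma to the splitting $E_i^+(x)=E_i(x)\oplus E_{i-1}^+(x)$. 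You instead decompose a unit $v\in E_i(x)$ directly through the full Oseledets splitting at $y$ and bound each off-diagonal component $\pi_j^y(v)$ individually by iterating in the time direction that amplifies the $j$-th block relative to the $i$-th; this is where you need the tempering of $\|\pi_j^{f^{\pm n}y}\|$, which the paper supplies through item~(d) of Theorem~\ref{EMT} and the function $K$ of Theorem~\ref{T2}. Your route is more direct and avoids both the intermediate $E_i^\pm$ step and the graph transform; the paper's route is more modular---the two-subspace cone lemma is reused verbatim for $F_{k+1}=E_k^-$ and again for the non-invertible filtration in Section~\ref{non-inert}---and never handles more than two complementary bundles at once, which keeps the bookkeeping of directions and exponents simpler.
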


\begin{remark} In the above theorem,
 $k$ is the number of all finite dimensional subspaces and $\lambda_{k+1}:=\alpha(A,\mu), F_{k+1}(x):=F(x)$ in {\bf Case (1)}, and $k$ is any given positive integer  in {\bf Case (2)}.
\end{remark}
	
	\section{Proofs}\label{proof}
	We only prove Theorem \ref{main} for the {\bf Case (1)}, that is, there are finite  Lyapunov exponents:
	$$\lambda_{1}>\lambda_{2}>\cdots>\lambda_{k}>\lambda_{k+1}:=\alpha(A,\mu),$$
	correspoding to the splitting $\cB=E_{1}(x) \oplus \cdots \oplus E_{k}(x) \oplus F(x)$ defined on an $f$-invariant subset $X_0$ of full $\mu$-measure. The {\bf Case (2)} can be proven in a similar fashion.

	\subsection{Construction of the regular set}
	We first recall  the definition of the Lyapunov norm. Given a sufficiently small number $\varepsilon>0$, for each $x\in X_0$ and every $u=u_1+\cdots+u_k+u_{k+1} \in \cB, u_i \in E_{i}(x)\,(i=1,\cdots,k), u_{k+1}\in F(x)$, the Lyapunov norm is defined as
	$$ ||u||_{x,\varepsilon}:=||u||_x=||u_1||_x+\cdots+||u_k||_x+||u_{k+1}||_x$$
	where
	$||u_i||_x=\sum_{n=-\infty}^{\infty} e^{-n\lambda_{i}-|n|\varepsilon} ||\cA(x,n)u_i||$ for $i=1,\cdots,k$ and
	$$||u_{k+1}||_x=\sum_{n=0}^{\infty} e^{-n(\lambda_{k+1}+\varepsilon)} ||\cA(x,n)u_{k+1}||.$$
	For every $n>0$ and every $u_i \in E_{i}(x)\,(i=1,\cdots,k)$, one can easily show that
	\begin{equation}\label{eq:u1}
		e^{n(\lambda_{i}-\varepsilon)} ||u_i||_x \leq ||\cA(x,n)u_i||_{f^n(x)} \leq e^{n(\lambda_{i}+\varepsilon)} ||u_i||_x,
	\end{equation}
	and for every $u_{k+1}\in F(x)$
	\begin{equation}\label{eq:u2}
		||\cA(x,n)u_{k+1}||_{f^n(x)} \leq e^{n(\lambda_{k+1}+\varepsilon)} ||u_{k+1}||_x.
	\end{equation}
	
	The following lemma provides some fundamental properties of the above Lyapunov norm, see \cite[Theorem 7.2.3]{Doan09} for detailed proofs.
	\begin{lemma}\label{norm}
		Given a small number $\varepsilon>0$, there exists a measurable function $D_{\varepsilon}: X_0\rightarrow [1,\infty)$ such that for every $ x \in X_0 $
		\begin{equation}
			||\cdot|| \leq || \cdot ||_{x} \leq D_{\varepsilon}(x) ||\cdot||
		\end{equation}
		and for each $n\in \Z$
		$$D_{\varepsilon}(f^n(x)) \leq e^{|n|\varepsilon} D_{\varepsilon}(x).$$
	\end{lemma}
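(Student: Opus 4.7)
The plan is to prove Lemma \ref{norm} in two steps: first establish a pointwise domination $\|\cdot\|_x \leq \phi(x)\|\cdot\|$ by a measurable, $\mu$-a.s.\ finite function $\phi$ built from the Oseledets data, then pass to a tempered majorant $D_\varepsilon$ via a supremum-over-orbit trick.

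The pointwise comparison is handled as follows. For the lower bound, writing $u = u_1 + \cdots + u_{k+1}$ in the splitting at $x$, the $n=0$ term of each series defining $\|u_i\|_x$ equals $\|u_i\|$, so $\|u_i\|_x \geq \|u_i\|$ and hence $\|u\|_x \geq \sum_i \|u_i\| \geq \|u\|$. For the upper bound, Theorem \ref{EMT}(a) forces $\tfrac{1}{n}\log\|\cA(x,n)|_{E_i(x)}\| \to \lambda_i$ in both time directions on each finite-dimensional $E_i(x)$, so
\[
K_i(x) \,:=\, \sup_{n \in \Z} e^{-n\lambda_i - |n|\varepsilon/2}\,\|\cA(x,n)|_{E_i(x)}\|
\]
is measurable and $\mu$-a.s.\ finite. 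Bounding the series for $\|u_i\|_x$ term-by-term by $K_i(x)$ times the geometric tail $\sum_n e^{-|n|\varepsilon/2}$ yields $\|u_i\|_x \leq C_\varepsilon K_i(x)\|u_i\|$; the analogous one-sided estimate on $F(x)$ using Theorem \ref{EMT}(b) provides a measurable $K_{k+1}(x)$ with $\|u_{k+1}\|_x \leq C_\varepsilon K_{k+1}(x)\|u_{k+1}\|$. Using $\|u_i\| \leq \|\pi_i(x)\|\cdot\|u\|$ from the splitting (with $\pi_{k+1}(x) := I - \sum_{i \leq k}\pi_i(x)$ the residual projection onto $F(x)$) gives $\|\cdot\|_x \leq \phi(x)\|\cdot\|$ with $\phi(x) := C_\varepsilon \sum_{i=1}^{k+1} K_i(x)\|\pi_i(x)\|$.

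The heart of the argument is the tempering step. Theorem \ref{EMT}(d) ensures $\|\pi_i(f^n x)\| = e^{o(|n|)}$ for $i \leq k$ (and hence also for $\pi_{k+1}$, by the linear-combination formula). For $i \leq k$, the cocycle identity $\cA(f^n x, m)|_{E_i(f^n x)} = \cA(x, n+m)|_{E_i(x)} \circ (\cA(x,n)|_{E_i(x)})^{-1}$, together with the two-sided Oseledets rates on the finite-dimensional $E_i(x)$ applied to both factors, sandwiches $K_i(f^n x)$ between subexponential multiples of $K_i(x)$. Once $\phi(f^n x) = e^{o(|n|)}$ holds $\mu$-a.s., setting
\[
D_\varepsilon(x) \,:=\, \max\Bigl\{1,\, \sup_{n \in \Z} e^{-|n|\varepsilon/2}\,\phi(f^n x)\Bigr\}
\]
produces a measurable, a.s.\ finite majorant of $\phi$; a direct index-shift $n \mapsto n+1$ in the supremum gives $D_\varepsilon(fx) \leq e^{\varepsilon/2} D_\varepsilon(x)$, and iteration yields the stronger $D_\varepsilon(f^n x) \leq e^{|n|\varepsilon/2}D_\varepsilon(x)$, which implies the required bound.

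The principal obstacle is the analogous subexponential control of $K_{k+1}$ on the infinite-dimensional $F(x)$: the MET provides no two-sided Lyapunov lower bound on $\cA|_F$, and the inverse of $\cA$ restricted to $F$ may grow exponentially in ways that the $E_i$-style cocycle argument cannot absorb. I would handle this by exploiting that $K_{k+1}$ only involves the forward tail of the series, combined with the strict gap $\lambda_{k+1} > \limsup_n \tfrac{1}{n}\log\|\cA(x,n)|_{F(x)}\|$ supplied by Theorem \ref{EMT}(b) and the uniform bounds on $\|A\|$ and $\|A^{-1}\|$ coming from compactness of $X$, to extract the two-sided subexponential control on $K_{k+1}(f^n x)$ needed to feed into the sup-over-orbit construction.
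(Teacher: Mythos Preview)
The paper itself does not prove this lemma; it merely cites \cite[Theorem 7.2.3]{Doan09}. So there is no in-paper argument to compare against, and your outline is in fact the standard route to such a result: bound each summand $\|u_i\|_x$ by a measurable constant times $\|u_i\|$ using the Oseledets rates, absorb the projections via Theorem~\ref{EMT}(d), and then temper the resulting $\phi$ by the sup-over-orbit construction. For $i\le k$ your argument is complete: the two-sided rates on the finite-dimensional $E_i(x)$ give $\|(\cA(x,m)|_{E_i(x)})^{-1}\|=e^{-m\lambda_i+o(|m|)}$, which feeds into the cocycle identity exactly as you describe to yield $K_i(f^m x)=e^{o(|m|)}K_i(x)$.

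The residual piece on $F(x)$ is where your sketch becomes genuinely incomplete. You correctly flag that no lower Lyapunov rate is available on $F$, and you list the right raw ingredients (the gap in Theorem~\ref{EMT}(b), and the uniform bounds on $\|A\|$, $\|A^{-1}\|$), but you do not say how they combine, and the cocycle-identity estimate you used for $E_i$ fails here precisely because $\|(\cA(x,m)|_{F(x)})^{-1}\|$ can grow like $(\sup_x\|A(x)^{-1}\|)^m$. The missing mechanism is a coboundary/Birkhoff step: from $\|\cA(fx,n)|_{F}\|\le \|A(x)^{-1}\|\cdot\|\cA(x,n+1)|_F\|$ and $\|\cA(x,n)|_F\|\le\|A(x)\|\cdot\|\cA(fx,n-1)|_F\|$ one gets that $\log K_{k+1}(fx)-\log K_{k+1}(x)$ is \emph{bounded} (this is exactly where compactness of $X$ enters), hence lies in $L^1(\mu)$; since it is an a.e.\ coboundary of the finite function $\log K_{k+1}$, a standard ergodic-theory lemma forces $\int(\log K_{k+1}\circ f-\log K_{k+1})\,d\mu=0$, and Birkhoff's theorem then gives $\tfrac{1}{n}\log K_{k+1}(f^n x)\to 0$ in both time directions. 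That is what makes the tempering supremum finite. The strict gap you invoke is only needed for the finiteness of $K_{k+1}$ itself, not for its temperedness.
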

	
	In the setting of the space $\R^{d}$ or the Hilbert space, we have that the angels between two  Oseledets subspaces decay sub-exponentially along orbits of $ x $. However, in the case of Banach spaces, there is not such statement since the lack of reasonable definition of the ``angels" between two closed linear subspaces of Banach space. The fact that the norm of each projection operator $\pi_{i}(x),i=1,\cdots,k$ is temperate (see (d) of Theorem \ref{EMT}) helps us to overcome this difficulty. For the multiplicative ergodic theorem of semi-invertible operator on Banach spaces (see \cite{Froyland13}), it is also valid by the following lemma proved by Dragi\v{c}evi\'{c} \textit{et al} \cite[Lemma 1]{Froyland18}.
	
	\begin{lemma}\label{L3}
		Let $\Lambda$ be an $f$-invariant set and let $E(x)$ and $F(x), x\in \Lambda$ be
		two families of closed subspaces of $\cB$. Assume that  there exist numbers $\chi_{2} < \chi_{1}, \varepsilon>0$ with $\chi_{2}+3\varepsilon \leq \chi_{1}-2\varepsilon$ and measurable functions $C,\tilde{C}:\Lambda \rightarrow [1,\infty)$ such that
		\begin{enumerate}
			\item[(1)] $A(x)E(x)\subset E(f(x)), A(x)F(x)\subset F(f(x))$ and $E(x)\cap F(x)=\{0\}$ for every $x\in \Lambda$;
			\item[(2)] for every $x\in \Lambda, v\in E(x)\oplus F(x)$ and $n\geq 0$,
			$$||\cA(x,n)v||\leq  \tilde{C}(x)e^{(\chi_{1}+\varepsilon)n}||v||;$$
			\item[(3)] for every $x\in \Lambda, v\in F(x)$ and $n\geq 0$,
			$$||\cA(x,n)v||\geq \frac{1}{\tilde{C}(x)}e^{(\chi_{1}-\varepsilon)n}||v||;$$
			\item[(4)] for every $x\in \Lambda, v\in E(x)$ and $n\geq 0$,
			$$||\cA(x,n)v||\leq  C(x)e^{(\chi_{2}+\varepsilon)n}||v||; \ and$$
			\item[(5)] for every $x\in \Lambda$ and $m\in \Z$,
			$$C(f^n(x)) \leq e^{|n|\varepsilon} C(x) \ and \
			\tilde{C}(f^n(x)) \leq e^{|n|\varepsilon} \tilde{C}(x).$$
		\end{enumerate}
		Then, there exists a measurable function $K:\Lambda \rightarrow [1,\infty)$ satisfies
		$$K(f^n(x)) \leq e^{|n|\varepsilon} K(x) \ \text{for each} \ n\in \Z \ and \ x\in \Lambda$$
		and such that
		$$||v_1||\leq K(x)||v_1+v_2|| \ and \ ||v_2||\leq K(x)||v_1+v_2|| $$
		for $v_1\in E(x)$ and $v_2\in F(x)$.
	\end{lemma}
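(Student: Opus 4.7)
The plan is to bound the projection norms by comparing the forward growth rates of vectors in $E(x)$ and $F(x)$ along an orbit, using the spectral gap $\chi_1 - \chi_2 > 0$ to force cancellation. Fix $x \in \Lambda$, $v_1 \in E(x)$, $v_2 \in F(x)$, and set $w = v_1 + v_2 \in E(x) \oplus F(x)$. Since $F(x)$ is $A$-invariant, $\cA(x,n)v_2 = \cA(x,n)w - \cA(x,n)v_1$, and combining the lower bound (3) with the upper bounds (2) and (4) yields, for every $n \geq 0$,
\begin{equation*}
\frac{1}{\tilde{C}(x)} e^{(\chi_1 - \varepsilon)n} \|v_2\| \leq \|\cA(x,n)v_2\| \leq \tilde{C}(x) e^{(\chi_1 + \varepsilon)n}\|w\| + C(x)e^{(\chi_2 + \varepsilon)n}\|v_1\|.
\end{equation*}
Multiplying through by $\tilde{C}(x)e^{-(\chi_1 - \varepsilon)n}$ and using the trivial bound $\|v_1\| \leq \|w\| + \|v_2\|$ gives
\begin{equation*}
\|v_2\| \leq \tilde{C}(x)^2 e^{2\varepsilon n}\|w\| + \tilde{C}(x) C(x) e^{(\chi_2 - \chi_1 + 2\varepsilon)n}\bigl(\|w\| + \|v_2\|\bigr).
\end{equation*}

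The gap hypothesis $\chi_2 + 3\varepsilon \leq \chi_1 - 2\varepsilon$ forces the exponent $\chi_2 - \chi_1 + 2\varepsilon$ to be at most $-3\varepsilon < 0$, so the coefficient of $\|v_2\|$ on the right can be made strictly less than $1/2$ by choosing $n = n(x)$ to be the smallest nonnegative integer with $\tilde{C}(x) C(x) e^{(\chi_2 - \chi_1 + 2\varepsilon)n} \leq 1/2$. This gives $n(x) \leq \frac{1}{\chi_1 - \chi_2 - 2\varepsilon}\log\bigl(2\tilde{C}(x)C(x)\bigr) + 1$, and absorbing the $\|v_2\|$ term on the left yields
\begin{equation*}
\|v_2\| \leq \bigl(2\tilde{C}(x)^2 e^{2\varepsilon n(x)} + 1\bigr)\|w\|.
\end{equation*}
Substituting the bound on $n(x)$ and setting $K_0(x) := 2\tilde{C}(x)^2 \bigl(2\tilde{C}(x)C(x)\bigr)^{2\varepsilon/(\chi_1 - \chi_2 - 2\varepsilon)} e^{2\varepsilon} + 1$ gives $\|v_2\| \leq K_0(x)\|w\|$, and then $\|v_1\| \leq (1 + K_0(x))\|w\|$ follows immediately. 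Define $K(x) := 1 + K_0(x)$.

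Temperedness of $K$ along orbits is the remaining point: since $K$ is a product of fixed powers of $\tilde{C}$ and $C$, and both $\tilde{C}, C$ satisfy $\tilde{C}(f^n x), C(f^n x) \leq e^{|n|\varepsilon}\tilde{C}(x), e^{|n|\varepsilon}C(x)$ by (5), the composite function $K(f^n x)$ is dominated by $e^{|n| \varepsilon M}K(x)$ for some fixed constant $M$ depending only on $\chi_1 - \chi_2$ and $\varepsilon$. The standard fix, which I would implement at the outset, is to replace $\varepsilon$ by $\varepsilon/M$ throughout hypotheses (1)--(5); this preserves the gap condition (after a harmless reduction of $\varepsilon$) and ensures $K$ is tempered with rate precisely $\varepsilon$ as required. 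The main subtlety, and the step I would pay closest attention to, is precisely this bookkeeping of exponents: one must verify that after the rescaling the gap $\chi_2 + 3\varepsilon \leq \chi_1 - 2\varepsilon$ is still available with the original $\chi_i$, which it is since the gap condition is an open inequality on $\varepsilon$.
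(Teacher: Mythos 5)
Your core estimate is fine: comparing $\|\cA(x,n)v_2\|\ge\tilde C(x)^{-1}e^{(\chi_1-\varepsilon)n}\|v_2\|$ with $\|\cA(x,n)(v_1+v_2)\|\le\tilde C(x)e^{(\chi_1+\varepsilon)n}\|v_1+v_2\|$ and $\|\cA(x,n)v_1\|\le C(x)e^{(\chi_2+\varepsilon)n}\|v_1\|$, choosing the first $n$ with $\tilde C(x)C(x)e^{(\chi_2-\chi_1+2\varepsilon)n}\le 1/2$, is the natural argument and yields $\|v_2\|\le K_0(x)\|v_1+v_2\|$ with $K_0(x)$ an explicit product of powers of $C(x),\tilde C(x)$. (Note the paper itself does not prove this lemma; it cites \cite[Lemma 1]{Froyland18} and only remarks that the proof carries over to Banach spaces, so there is no internal proof to compare with.)

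The genuine gap is the temperedness rate, and your proposed repair does not close it. Your $K$ involves $\tilde C^{2+\theta}C^{\theta}$ with $\theta=2\varepsilon/(\chi_1-\chi_2-2\varepsilon)\le 2/3$, so hypothesis (5) only gives $K(f^nx)\le e^{(2+2\theta)|n|\varepsilon}K(x)$, i.e.\ rate up to $\tfrac{10}{3}\varepsilon$, not $\varepsilon$. You cannot ``replace $\varepsilon$ by $\varepsilon/M$ throughout hypotheses (1)--(5)'': those hypotheses are the given data of the lemma, and with a smaller $\varepsilon$ they are strictly \emph{stronger} statements --- (2)--(4) would assert tighter growth bounds on the cocycle, and (5) would assert slower variation of the given functions $C,\tilde C$ --- none of which follows from what is assumed. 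The ``open inequality'' you check concerns only the gap condition, not the availability of (2)--(5) at the smaller parameter, so the rescaling is circular. Nor can you recover rate $\varepsilon$ by setting $K(x)=\sup_{m\in\Z}e^{-|m|\varepsilon}\kappa(f^mx)$ (with $\kappa$ the actual projection quantity), since your bound $\kappa(f^mx)\le K_0(f^mx)\le e^{(2+2\theta)|m|\varepsilon}K_0(x)$ does not show this supremum is finite. What your argument honestly proves is the lemma with conclusion $K(f^nx)\le e^{c|n|\varepsilon}K(x)$ for an explicit constant $c>1$ (bounded using the gap condition); this weaker form is in fact sufficient for the way the lemma is used later in the paper, where $\varepsilon$ is a free small parameter fixed before $C,\tilde C$ are constructed, but to obtain the statement as written you must either weaken the conclusion accordingly or give a different argument, e.g.\ the one in the cited Lemma 1 of \cite{Froyland18}.
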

\begin{remark}	Dragi\v{c}evi\'{c} \textit{et al} \cite{Froyland18} proved the above lemma for the Euclid space $\R^d$, it is also valid for the case of Banach space $\cB$, with a minor modification of the proof. Use the lemma above with some inductions, one can easily to show that (d) of Theorem \ref{EMT} is valid for semi-invertible cocycle on Banach space.
\end{remark}
	
	\begin{theorem}\label{T2}
		Let $\cA$ be a cocycle over $ f $ with Lyapunov exponents as in Case (1), for each $i \in \{1,\cdots,k\}$. Let
		$$ E_i^{+}(x)=\bigoplus_{j=1}^{i}E_{j}(x) \ and \    E_i^{-}(x)=(\bigoplus_{j=i+1}^{k}E_{j}(x)) \bigoplus F(x).$$
		Then there exists a full $\mu$-measure, $f$-invariant subset $\Lambda$ of $X$ such that, for each $\varepsilon>0$ small enough with $\varepsilon<\min_{i=1,\cdots,k}\{(\lambda_{i}-\lambda_{i+1})/100\}$, there are measurable functions $C,K:\Lambda \rightarrow [1,\infty)$ with
		$$C(f^n(x)) \leq e^{|n|\varepsilon} C(x) \ and \ K(f^n(x)) \leq e^{|n|\varepsilon} K(x).$$
		so that for every $x\in \Lambda$:
		\begin{enumerate}
			\item[(1)] for each $u\in E_i^{-}(x),v\in E_i^{+}(x)$ and $n\geq 0$,
			$$||\cA(x,n)u||\leq  C(x)e^{(\lambda_{i+1}+\varepsilon)n}||u|| \ and \
			||\cA(x,n)v||\geq \frac{1}{C(x)}e^{(\lambda_{i}-\varepsilon)n}||v||;$$
			\item[(2)] for each $u\in E_i^{-}(x)$ and $v\in E_i^{+}(x)$,
			$$\max\{||u||,||v||\}\leq K(x) ||u+v||.$$
		\end{enumerate}
	\end{theorem}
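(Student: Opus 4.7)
The plan is to extract the growth estimates in (1) directly from the Lyapunov norm introduced before Lemma \ref{norm}, and to obtain the projection bound in (2) by iterating Lemma \ref{L3} across the consecutive two-term Oseledets splittings $E_j(x)\oplus E_j^-(x)$ for $j=1,\ldots,k$. Fix $\varepsilon>0$ small enough (we will eventually replace it by $\varepsilon/k$) and take $\Lambda$ to be the full-measure, $f$-invariant set on which the Oseledets decomposition of Theorem \ref{EMT} and the Lyapunov norm $\|\cdot\|_x$ are defined, with $D_\varepsilon$ as in Lemma \ref{norm}.

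For (1), given $v\in E_i^+(x)$, decompose $v=v_1+\cdots+v_i$ with $v_j\in E_j(x)$ and apply \eqref{eq:u1} componentwise:
\begin{equation*}
\|\cA(x,n)v\|_{f^n(x)}=\sum_{j=1}^{i}\|\cA(x,n)v_j\|_{f^n(x)}\geq\sum_{j=1}^{i}e^{n(\lambda_j-\varepsilon)}\|v_j\|_x\geq e^{n(\lambda_i-\varepsilon)}\|v\|_x.
\end{equation*}
Combining with $\|\cdot\|\leq\|\cdot\|_x\leq D_\varepsilon(x)\|\cdot\|$ and the temperedness of $D_\varepsilon$ from Lemma \ref{norm} gives $\|\cA(x,n)v\|\geq D_\varepsilon(x)^{-1}e^{n(\lambda_i-2\varepsilon)}\|v\|$. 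A symmetric use of \eqref{eq:u1} and \eqref{eq:u2} on the components of $u\in E_i^-(x)$, whose exponents are all at most $\lambda_{i+1}$, yields $\|\cA(x,n)u\|\leq D_\varepsilon(x)e^{n(\lambda_{i+1}+\varepsilon)}\|u\|$. Setting $C(x):=D_\varepsilon(x)$ and absorbing the factor of $2$ into $\varepsilon$ proves (1); temperedness of $C$ is inherited from that of $D_\varepsilon$.

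For (2), apply Lemma \ref{L3} once for each $j\in\{1,\ldots,k\}$ with $F(x)=E_j(x)$, $E(x)=E_j^-(x)$, $\chi_1=\lambda_j$, $\chi_2=\lambda_{j+1}$. Hypothesis (1) there is the $A(x)$-invariance and transversality of the Oseledets subspaces; (3) and (4) are the growth estimates just established; the global upper bound in hypothesis (2) on $v=v_j+v'\in E_j\oplus E_j^-$ follows from the same Lyapunov-norm calculation, since both summands grow at rate at most $\lambda_j+\varepsilon$; and (5) is the temperedness of $D_\varepsilon$. This produces tempered functions $K_j:\Lambda\to[1,\infty)$ with $\|v_j\|,\|v'\|\leq K_j(x)\|v_j+v'\|$ for $v_j\in E_j(x)$, $v'\in E_j^-(x)$. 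To reach the full splitting $E_i^+\oplus E_i^-$, iterate: for $w=u_1+\cdots+u_i+v$ with $u_j\in E_j(x)$ and $v\in E_i^-(x)$, peel off $u_1$ via $K_1$, then $u_2$ via $K_2$ applied to $u_2+\cdots+u_i+v$, and so on; this gives $\|u_j\|\leq\prod_{\ell=1}^{j}K_\ell(x)\|w\|$ and $\|v\|\leq\prod_{\ell=1}^{i}K_\ell(x)\|w\|$. Setting $K(x):=(i+1)\prod_{\ell=1}^{k}K_\ell(x)$ then proves (2), and $K$ is $\varepsilon$-tempered at the prescribed rate provided $\varepsilon/k$ was used in place of $\varepsilon$ throughout.

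The main obstacle is the iteration in the last step: Lemma \ref{L3} only produces two-term projection bounds, so multi-term projections must be assembled one summand at a time, and at each intermediate step one must verify the global upper bound required by hypothesis (2) of Lemma \ref{L3}; this is exactly where the Lyapunov-norm decomposition is essential, since $a$ priori there is no bound on the $\cB$-norm of each summand in terms of the $\cB$-norm of the sum. The bookkeeping with $\varepsilon$ (rescaling by $1/k$ at the start so that the product of $k$ tempered factors remains tempered at the target rate) is routine but must be tracked.
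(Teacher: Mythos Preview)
Your argument for part (1) is essentially the paper's: both extract the growth estimates directly from the Lyapunov norm via \eqref{eq:u1}, \eqref{eq:u2} and Lemma~\ref{norm}, taking $C=D_\varepsilon$.

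For part (2) you take a genuinely different route. The paper bypasses Lemma~\ref{L3} entirely: since item (d) of Theorem~\ref{EMT} already asserts that each projection $\pi_j(x)$ onto $E_j(x)$ is tempered, the composite projections $\pi_i^+=\pi_1+\cdots+\pi_i$ and $\pi_i^-=Id-\pi_i^+$ inherit this, and the paper simply sets
\[
K(x)=\sup_{n\in\Z}\,e^{-|n|\varepsilon}\max\{\|\pi_i^+(f^n(x))\|,\|\pi_i^-(f^n(x))\|\},
\]
which is finite and $\varepsilon$-tempered by construction; the inequality in (2) is then just $\|u\|=\|\pi_i^-(x)(u+v)\|\le K(x)\|u+v\|$. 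Your approach instead rebuilds the projection bound from the growth estimates alone, applying Lemma~\ref{L3} once at each two-term splitting $E_j(x)\oplus E_j^-(x)$ and then telescoping the resulting $K_j$'s; the verification of hypothesis (2) of Lemma~\ref{L3} via the Lyapunov norm and the $\varepsilon/k$ rescaling for temperedness are both correct. The paper's route is shorter because (d) is handed to you as part of Thieullen's theorem; your route is more self-contained, since it does not invoke (d) and works verbatim under a semi-invertible MET where (d) is not part of the statement --- indeed the remark following Lemma~\ref{L3} points to exactly this application.
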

	\begin{proof}
    Following the proof of Proposition 3.2 in \cite{Lucas17}, we give the detailed proofs as follows.
    By \eqref{eq:u1}, \eqref{eq:u2} and Lemma \ref{norm}, there exist a set $\Lambda$ and a measurable function $C:\Lambda \rightarrow [1,\infty)$ with $C(f^{\pm}(x)) \leq e^{\varepsilon} C(x)$ so that the first statement hold. To complete the proof of the theorem, it suffices to prove the second statement.

    Let $\pi_{i}^{+}(x)$ and $\pi_{i}^{-}(x)$ denote the projections of $\cB$ onto $E_{i}^{+}(x)$ and $E_{i}^{-}(x)$ via the splitting $\cB=E_{i}^{+}(x) \oplus E_{i}^{-}(x)$. By (d) of Theorem \ref{EMT}, for every $x\in \Lambda$ we have that
    $$\lim_{n\rightarrow \pm\infty} \frac{1}{n} \log||\pi_{i}^{+}(f^{n}(x))||=0, \ \lim_{n\rightarrow \pm\infty} \frac{1}{n} \log||\pi_{i}^{-}(f^{n}(x))||=0.$$
        Therefore, define the function $K:\Lambda \rightarrow [1,\infty)$ as follows:
    $$K(x)=\sup_{n\in \Z} \dfrac{\max\{||\pi_{i}^{-}(f^{n}(x))||,||\pi_{i}^{+}(f^{n}(x))||\}}{e^{|n|\varepsilon}}.$$
    One can easily to show that $K$ is a well defined function on $\Lambda$. Moreover, one has that $K(x)\geq\max\{||\pi_{i}^{-}(x)||,||\pi_{i}^{+}(x)||\}$ and $K(f^{\pm}(x)) \leq e^{\varepsilon} K(x)$ for every $x\in \Lambda$.
		
    Finally, for each $u\in E_{i}^{-}(x)$ and each $v\in E_{i}^{+}(x)$ one has that
    \begin{align}\label{norm-proj1}
    	||u||=||\pi_{i}^{-}(x)(u+v)||\leq ||\pi_{i}^{-}(x)||\cdot||u+v|| \leq K(x)||u+v||
    \end{align} and
    \begin{align}\label{norm-proj2}
    	||v||=||\pi_{i}^{+}(x)(u+v)||\leq ||\pi_{i}^{+}(x)||\cdot||u+v|| \leq K(x)||u+v||.
    \end{align}
    This completes the proof of the theorem.
	\end{proof}
	
	Fix a sufficiently small $ \varepsilon>0 $. For every $\ell\in \N$, we define the regular set $\Lambda_{\ell}$ by $$\Lambda_{\ell}=\{x\in \Lambda:C(x)\leq \ell \ and \ K(x)\leq \ell\}.$$ One can easily show that each $\Lambda_{\ell}$ is compact, $\Lambda_{\ell} \subset \Lambda_{\ell+1}$ and $\bigcup_{\ell>0}\Lambda_{\ell}=\Lambda$. Thus, for every $\gamma>0$, we may choose a subset $\Lambda_{\ell}$ with $\mu(\Lambda_{\ell})>1-\gamma$.

	\subsection{H\"{o}lder continuity of maps $x\mapsto E_i^{-}(x)$ and $x\mapsto E_i^{+}(x)$}
Fix $ i\in \{1,\cdots,k\}$, and let $E_i^{-}(x) $ and $E_i^{+}(x)$ be as in Theorem \ref{T2}, we now prove that the map $x\mapsto E_i^{-}(x)$ and $x\mapsto E_i^{+}(x)$ are (locally) H\"{o}lder continuous on $\Lambda_{\ell}$.

	The following two lemmas is useful in the proof of the main result. See \cite{Brin01} for the original versions of the finite dimensional case, we also refer the reader to Lemmas 5.3.4 and  5.3.5 in \cite{Pesin07} or  Lemmas 2.1 and  2.2 in \cite{Simion16}.
	\begin{lemma}\label{L4}
		Assume that $A:X \rightarrow GL(\cB)$ is $\nu$-H\"{o}lder continuous with H\"{o}lder constant $a_1$ and
		$f:X\rightarrow X$ is bi-Lipschitz with constant $L\geq 1$, then there exists a constant $a>a_1$ such that
		\begin{equation}\label{eq:L4}
			||\cA(x,n)-\cA(y,n)||\leq a^{|n|} d(x,y)^{\nu}
		\end{equation}
		for every $x,y\in X$ and $n\in \Z$.
	\end{lemma}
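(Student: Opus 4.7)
The plan is to use a standard telescoping identity for a product of operators, then bound each term uniformly using the Hölder control on $A$ and the Lipschitz control on $f$. Set $M:=\|A\|=\sup_{x\in X}\|A(x)\|$ and $M':=\sup_{x\in X}\|A(x)^{-1}\|$, both finite since $X$ is compact. First I would handle the case $n\ge 0$. Writing $B_j=A(f^j(x))$ and $C_j=A(f^j(y))$, the telescoping identity
$$B_{n-1}\cdots B_0-C_{n-1}\cdots C_0=\sum_{j=0}^{n-1} B_{n-1}\cdots B_{j+1}\,(B_j-C_j)\,C_{j-1}\cdots C_0$$
rewrites as
$$\cA(x,n)-\cA(y,n)=\sum_{j=0}^{n-1}\cA(f^{j+1}(x),n{-}j{-}1)\bigl(A(f^j(x))-A(f^j(y))\bigr)\cA(y,j).$$

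Now each factor is easy to bound. Submultiplicativity of the norm gives $\|\cA(f^{j+1}(x),n{-}j{-}1)\|\le M^{n-j-1}$ and $\|\cA(y,j)\|\le M^{j}$. The middle factor uses Hölder continuity of $A$ with constant $a_1$ and the bi-Lipschitz property of $f$ (with constant $L\ge 1$), giving
$$\|A(f^j(x))-A(f^j(y))\|\le a_1\,d(f^j(x),f^j(y))^\nu\le a_1 L^{j\nu}d(x,y)^\nu.$$
Summing the $n$ terms yields $\|\cA(x,n)-\cA(y,n)\|\le a_1\,n\,M^{n-1}L^{(n-1)\nu}d(x,y)^\nu$. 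It suffices then to choose any $a>a_1$ satisfying $a\ge 2ML^\nu$ and large enough that $a_1 n\le a^n$ for every $n\ge 1$; with such a choice the last display is bounded by $a^n d(x,y)^\nu$, which is the desired estimate \eqref{eq:L4} for $n\ge 0$.

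For $n<0$, I would reduce to the positive case by observing that $\cA(x,n)=A(f^n(x))^{-1}\cdots A(f^{-1}(x))^{-1}$ is itself a product of operator values along the orbit of $x$ under $f^{-1}$. Since $f^{-1}$ is Lipschitz (again with constant $L$, up to renaming) and the metric $\rho$ on $GL(\cB)$ includes the term $\|A^{-1}-B^{-1}\|$, the hypothesis that $A$ is $\nu$-Hölder in $\rho$ gives that $x\mapsto A(x)^{-1}$ is itself $\nu$-Hölder with the same constant $a_1$. Applying the same telescoping argument to this cocycle, with $M$ replaced by $M'$, produces the bound $a^{|n|}d(x,y)^\nu$ after enlarging $a$ if necessary to also dominate $2M'L^\nu$.

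There is no real obstacle here; the argument is entirely routine. The only point requiring care is the choice of the constant $a$: it must be simultaneously larger than $a_1$, and large enough to absorb both the uniform operator-norm growth factors $M,M'$ and the exponential factor $L^\nu$ coming from iterating the Lipschitz distortion, as well as the polynomial factor $n$ arising from the $n$ terms of the telescoping sum. Taking, for instance, $a=\max\{a_1+1,\,2ML^\nu,\,2M'L^\nu\}\cdot e$ works, since then $a^n/n\to\infty$ dominates every prefactor uniformly in $n$, and the case $n=0$ is trivial because $\cA(x,0)=\cA(y,0)=\mathrm{Id}$.
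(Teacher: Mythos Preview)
Your proof is correct and follows essentially the same approach as the paper's. The paper argues by induction on $n$, peeling off one factor at a time, whereas you write out the full telescoping sum at once; the resulting bounds differ only in bookkeeping (you pick up an explicit factor $n$ that the inductive recursion hides), and both are absorbed by choosing $a$ sufficiently large in the same way.
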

	\begin{proof}
		We follow the proof of Lemma 2.2 in \cite{Simion16} and argue by induction on $n$.
		
		For $k=1$, by the H\"{o}lder continuity of the map $A:X \rightarrow GL(\cB)$ and \eqref{comp-metric} we have $$||A(x)-A(y)||\leq a_1 d(x,y)^{\nu}.$$
		Assume that there exists $a>a_1$ so that \eqref{eq:L4} hold for $ k=1,2,\cdots,n$. For $k=n+1$, we have that 		 \begin{align*}
			||&\cA(x,n+1)-\cA(y,n+1)||\\
			&\leq || A(f^n(x))||\cdot ||\cA(x,n)-\cA(y,n)||+||\cA(y,n)||\cdot||A(f^n(x))-A(f^n(y)) ||\\
			&\leq (\sup_{x\in X} ||A(x||) a^{n} d(x,y)^{\nu}+(\sup_{x\in X} ||A(x)||)^n a_1 d(f^n(x),f^n(y))^{\nu}\\
			&\leq [(\sup_{x\in X} ||A(x||)a^{n}+(\sup_{x\in X} ||A(x)||)^n a_1L^{n \nu }]d(x,y)^{\nu}
		\end{align*}
where we use the fact that $f$ is Lipschitz in the last inequality.
		To find the number $a$, all we need is that
		$$a^{n+1}>(\sup_{x\in X} ||A(x||)a^{n}+(\sup_{x\in X} ||A(x)||)^n a_1L^{n \nu },$$
		that is
		$$a\geq a_1(\dfrac{\sup_{x\in X} ||A(x)||\cdot L^{\nu}}{a})^n+\sup_{x\in M} ||A(x)||.$$
		This can be easily  achieved by taking a sufficiently large $a$ such that
		$$a > \max\{a_1,\sup_{x\in M} ||A(x)||\cdot L^{\nu} \}.$$
		 The case for $n<0$ can be proven in a similar fashion, this completes the proof.
	\end{proof}


	\begin{lemma}\label{L5}
		Let $\{A_n\}_{n\geq1},\,\{B_n\}_{n\geq1}$ be two sequences of operators in $GL(\cB)$, such that, for some $0<\alpha_{2}<\alpha_{1}$ and $\ell\geq 1$, there exist closed subspaces $E,{E}',F,{F}'$ and $\cB_0$ of $\cB$ satisfying $\cB_0=E \oplus {E}'=F \oplus {F}'$ such that for some fixed $n$
		\begin{enumerate}
			\item[(i)] $||A_{n}u||\leq \ell \alpha_{2}^n||u||$  and $||A_{n}v||\geq \ell^{-1} \alpha_{1}^n ||v||$ for every $u\in E$, $v\in {E}'$;
			\item[(ii)]$||B_{n}u||\leq \ell \alpha_{2}^n||u||$ and $||B_{n}v||\geq \ell^{-1} \alpha_{1}^n ||v||$ for every $u\in F$, $v\in {F}'$;
			\item[(iii)] $\max\{||v||,||w||\} \leq \ell||u||$ for each $u=v+w, v\in E,w\in{E}'$ or $v\in F,w\in{F}'$.
		\end{enumerate}	
		Then for every $\delta<1, a \geq \alpha_{1}$ satisfying
		$$(\frac{\alpha_2}{a})^{n+1}\le\delta < (\frac{\alpha_2}{a})^{n} \  \text{and} \ ||A_n-B_n ||\leq \delta a^n,$$
		we have that $$ \hat{d}(E,F)\leq (4+2\ell)\ell^2 \frac{\alpha_{1}}{\alpha_{2}} \delta^{\log(\alpha_{2}/\alpha_{1})/\log(\alpha_{2}/a)}.$$
	\end{lemma}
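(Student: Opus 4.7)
The plan is to take a unit vector $u\in E$, decompose it as $u=v+w$ with $v\in F$ and $w\in F'$ (using the splitting $\cB_0=F\oplus F'$), and show that the ``wrong component'' $w\in F'$ is small. By hypothesis (iii) applied to the $F\oplus F'$ splitting, $\|v\|,\|w\|\leq \ell$. The idea is that applying $B_n$ to $u$ should behave like applying $A_n$ to $u$ up to the error $\delta a^n$, and the $A_n$-image is small (bounded by $\ell\alpha_2^n$) since $u\in E$; on the other hand $B_n w$ must be large (bounded below by $\ell^{-1}\alpha_1^n\|w\|$) since $w\in F'$. These two facts force $\|w\|$ to be small.

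Concretely, I would estimate as follows. From (i), $\|A_nu\|\leq \ell\alpha_2^n$, hence by the error bound $\|B_nu\|\leq \ell\alpha_2^n+\delta a^n$. Writing $B_nu=B_nv+B_nw$ and using (ii) on $v\in F$ gives $\|B_nv\|\leq \ell\alpha_2^n\|v\|\leq\ell^2\alpha_2^n$, so by the triangle inequality
\begin{equation*}
\|B_nw\|\leq \|B_nu\|+\|B_nv\|\leq \ell(1+\ell)\alpha_2^n+\delta a^n.
\end{equation*}
Combined with the lower bound $\|B_nw\|\geq \ell^{-1}\alpha_1^n\|w\|$ from (ii) on $w\in F'$, this yields
\begin{equation*}
\|w\|\leq \ell(1+\ell)\ell\Bigl(\tfrac{\alpha_2}{\alpha_1}\Bigr)^n+\ell\,\delta\Bigl(\tfrac{a}{\alpha_1}\Bigr)^n.
\end{equation*}

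The next step converts both terms into powers of $\delta$ using the hypothesis $(\alpha_2/a)^{n+1}\leq \delta<(\alpha_2/a)^n$. Set $\theta=\log(\alpha_2/\alpha_1)/\log(\alpha_2/a)\in(0,1]$ (both logs are negative since $a\geq\alpha_1>\alpha_2$, so $\theta>0$; and $\alpha_2/a\leq \alpha_2/\alpha_1$ gives $\theta\leq 1$). From $(\alpha_2/a)^{n+1}\leq\delta$ I would derive, by taking logs and carefully handling signs, the bound $(\alpha_2/\alpha_1)^n\leq (\alpha_1/\alpha_2)\,\delta^\theta$. Dually, from $\delta<(\alpha_2/a)^n$ I get $\delta(a/\alpha_1)^n\leq \delta^\theta$ after checking that $1+\log(a/\alpha_1)/\log(\alpha_2/a)=\theta$. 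Substituting back, $\|w\|\leq C'\,\delta^\theta$ with $C'$ of the form $\ell^2(1+\ell)(\alpha_1/\alpha_2)+\ell$, which is bounded by $\tfrac12(4+2\ell)\ell^2(\alpha_1/\alpha_2)$ when $\ell\geq 1$.

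Since $\dist(u,F)\leq \|u-v\|=\|w\|$ for every unit $u\in E$, this gives $\delta(E,F)\leq C'\delta^\theta$. The roles of $(A_n,E,E')$ and $(B_n,F,F')$ are symmetric in the hypotheses, so swapping them yields the same bound on $\delta(F,E)$, hence on $\hat\delta(E,F)$. Finally, the inequality $\hat{d}(E,F)\leq 2\hat\delta(E,F)$ from \eqref{eq:gap} produces the claimed bound $(4+2\ell)\ell^2(\alpha_1/\alpha_2)\,\delta^\theta$. The main technical obstacle is the exponent bookkeeping in the second paragraph: because $\log(\alpha_2/a)<0$, the inequalities flip when dividing, and one must verify that the specific choice of $n$ dictated by the hypothesis is precisely the one balancing the ``contracting'' term $(\alpha_2/\alpha_1)^n$ against the ``error'' term $\delta(a/\alpha_1)^n$, producing the same exponent $\theta$ for both; once this algebra is in hand, the rest is routine.
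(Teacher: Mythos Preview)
Your argument is correct and follows essentially the same approach as the paper: bound the ``wrong'' component of a vector in one subspace after decomposing it along the other splitting, using the contraction/expansion hypotheses together with $\|A_n-B_n\|\leq\delta a^n$, and then convert $(\alpha_2/\alpha_1)^n$ into $\delta^\theta$ via the bracketing $(\alpha_2/a)^{n+1}\leq\delta<(\alpha_2/a)^n$. The only cosmetic differences are that the paper phrases the first step through a cone $Q=\{u:\|A_nu\|\leq 2\ell\alpha_2^n\|u\|\}$ and immediately absorbs $\delta a^n\leq\alpha_2^n$ (from $\delta<(\alpha_2/a)^n$) to obtain a single term $(2+\ell)\ell^2(\alpha_2/\alpha_1)^n$, whereas you carry the two terms separately and use the identity $1+\log(a/\alpha_1)/\log(\alpha_2/a)=\theta$ to handle the second one; both routes land on the same constant.
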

	\begin{proof}
		With a minor modification of the proof of Lemma 2.1 in \cite{Simion16}, we give the proof of the result in the following.
		
		Let us define the cone $Q=\{u\in \cB_0:||A_n u||\leq 2\ell \alpha_{2}^n||u||\}$. For each $v\in F$, one has
		\begin{align*}
			||A_n v||&\leq ||A_n-B_n||\cdot ||v||+||B_n v||\\
			&\leq (\delta a^n+\ell\alpha_{2}^n)||v||\\
&\leq 2\ell\alpha_{2}^n||v||.
		\end{align*}
In consequence,  $v\in Q$ and this implies that $F\subset Q$.
		
		For each $v\in Q$, write $v=v_1+v_2$ where $v_1\in E$ and $v_2\in {E}'$.  Recall that $\ell||v||\geq\max\{||v_1||,||v_2||\}$, one has
		\begin{align*}
			2\ell\alpha_{2}^n||v||&\geq||A_n (v_1+v_2)||\\
			&\geq||A_n v_2||-||A_n v_1||\\
			&\geq \ell^{-1} \alpha_{1}^n ||v_2||-\ell\alpha_{2}^n \cdot \ell||v||.
		\end{align*}
		This together with the fact that $(\alpha_2/a)^{n+1}< \delta < (\alpha_2/a)^{n} $ imply that
		\begin{align*}
			\dist(v,E)\leq ||v_2|| \leq (2+\ell)\ell^2 \Big(\frac{\alpha_{2}}{\alpha_{1}}\Big)^n ||v||
			\leq (2+\ell)\ell^2\frac{\alpha_{1}}{\alpha_{2}}\delta^{\log(\alpha_{2}/\alpha_{1})/\log(\alpha_{2}/a)}||v||.
		\end{align*}
		 Since $F\subset Q$, by the definition of $\delta(\cdot,\cdot)$ one has
		$$
		\delta(F,E)\leq (2+\ell)\ell^2\frac{\alpha_{1}}{\alpha_{2}}\delta^{\log(\alpha_{2}/\alpha_{1})/\log(\alpha_{2}/a)}.$$
		Symmetrically, one can show that  $\delta(E,F)\leq (2+\ell)\ell^2\frac{\alpha_{1}}{\alpha_{2}}\delta^{\log(\alpha_{2}/\alpha_{1})/\log(\alpha_{2}/a)}$. Hence, using
		\eqref{eq:gap} we conclude that
		$$\hat{d}(E,F)\leq (4+2\ell)\ell^2\frac{\alpha_{1}}{\alpha_{2}}\delta^{\log(\alpha_{2}/\alpha_{1})/\log(\alpha_{2}/a)}.$$
	\end{proof}

	Now, we estimate $\hat{d}(E_i^{-}(x),E_i^{-}(y))$ and $\hat{d}(E_i^{+}(x),E_i^{+}(y))$ for each $x,y\in \Lambda_{l}$ with $d(x,y)< 1$, where $E_i^{-}(x),E_i^{+}(x)$ is the same as in Theorem \ref{T2}.
	\begin{lemma}\label{H-}
		For each $x,y\in \Lambda_{\ell}$ with $d(x,y)< 1$, we have
		$$\hat{d}(E_i^{-}(x),E_i^{-}(y))\leq C_{i}^{-} d(x,y)^{\nu_{i}^{-}}$$
		where $C_{i}^{-}$ and $\nu_{i}^{-}$ are two constants.
	\end{lemma}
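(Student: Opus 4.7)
The plan is to apply Lemma \ref{L5} directly to the pair of splittings
\[
\cB=E_i^{+}(x)\oplus E_i^{-}(x)=E_i^{+}(y)\oplus E_i^{-}(y),
\]
with $A_n:=\cA(x,n)$, $B_n:=\cA(y,n)$, $E:=E_i^{-}(x)$, $E':=E_i^{+}(x)$, $F:=E_i^{-}(y)$, $F':=E_i^{+}(y)$, and parameters $\alpha_2:=e^{\lambda_{i+1}+\varepsilon}$ and $\alpha_1:=e^{\lambda_{i}-\varepsilon}$. Conditions (i) and (ii) of Lemma \ref{L5} are precisely Theorem \ref{T2}(1) applied at $x$ and at $y$ respectively, using $C(x),C(y)\leq \ell$; condition (iii) is Theorem \ref{T2}(2) at $x$ and $y$, using $K(x),K(y)\leq \ell$. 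So the hypotheses of Lemma \ref{L5} hold with the same constant $\ell$ for every $n\geq 0$.

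Next I would link the separation of $A_n$ from $B_n$ to $d(x,y)$ via Lemma \ref{L4}, which gives a constant $a>0$ (depending only on $A$ and on the Lipschitz constant of $f$) with $\|A_n-B_n\|\leq a^n d(x,y)^{\nu}$ for all $n\geq 0$. By enlarging $a$ if necessary I may assume $a>\alpha_1$, so that $\alpha_2/a<1$. Set $\delta:=d(x,y)^{\nu}$; since $d(x,y)<1$ we have $\delta<1$. Choose the unique nonnegative integer $n$ with
\[
\Bigl(\frac{\alpha_2}{a}\Bigr)^{n+1}\le \delta < \Bigl(\frac{\alpha_2}{a}\Bigr)^{n}.
\]
Then $\|A_n-B_n\|\leq a^n d(x,y)^{\nu}=\delta a^n$, so the quantitative hypothesis of Lemma \ref{L5} is satisfied for this particular $n$. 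Applying Lemma \ref{L5} yields
\[
\hat{d}\bigl(E_i^{-}(x),E_i^{-}(y)\bigr)\leq (4+2\ell)\ell^{2}\frac{\alpha_1}{\alpha_2}\,\delta^{\log(\alpha_2/\alpha_1)/\log(\alpha_2/a)}
=C_i^{-}\,d(x,y)^{\nu_i^{-}},
\]
with $C_i^{-}:=(4+2\ell)\ell^{2}\alpha_1/\alpha_2$ and the H\"older exponent
\[
\nu_i^{-}:=\nu\cdot\frac{\log(\alpha_2/\alpha_1)}{\log(\alpha_2/a)}=\nu\cdot\frac{\lambda_{i+1}-\lambda_{i}+2\varepsilon}{\lambda_{i+1}+\varepsilon-\log a},
\]
which is positive (both numerator and denominator are negative, since $\varepsilon$ is small and $a>\alpha_1=e^{\lambda_i-\varepsilon}$) and can be made strictly less than $1$ by taking $a$ sufficiently large in Lemma \ref{L4}.

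The main technical points I anticipate as the real work are: first, checking that Lemma \ref{T2}(1)--(2) can indeed be used \emph{simultaneously} at the two different points $x$ and $y$ with a common constant $\ell$ (this is exactly what the definition of $\Lambda_{\ell}$ was designed for); second, making the choice of $a$ in Lemma \ref{L4} compatible with the requirement $a\geq \alpha_1$ needed by Lemma \ref{L5}, which is a harmless enlargement; and third, keeping track of the fact that Theorem \ref{T2}(1) gives bounds in the original norm $\|\cdot\|$ (not the Lyapunov norm), which is exactly the setting in which Lemmas \ref{L4} and \ref{L5} operate, so no conversion is needed. The symmetric estimate $\hat{d}(E_i^{+}(x),E_i^{+}(y))$, to be used in the next stage of the argument, follows from the same scheme with the roles of $E_i^{-}$ and $E_i^{+}$ interchanged (and a backward iterate $n<0$ playing the role of $n$).
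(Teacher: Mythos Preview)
Your proposal is correct and follows essentially the same route as the paper: both apply Lemma~\ref{L5} with $A_n=\cA(x,n)$, $B_n=\cA(y,n)$, $E=E_i^{-}(x)$, $F=E_i^{-}(y)$, $\alpha_2=e^{\lambda_{i+1}+\varepsilon}$, $\alpha_1=e^{\lambda_i-\varepsilon}$, verify its hypotheses via Theorem~\ref{T2} and the definition of $\Lambda_\ell$, and feed in $\delta=d(x,y)^\nu$ together with the exponential bound from Lemma~\ref{L4}. Your explicit remark that $a$ may be enlarged to ensure $a\ge\alpha_1$ is in fact slightly more careful than the paper, which only records $a>\alpha_2$; the resulting constants $C_i^{-}$ and $\nu_i^{-}$ agree exactly.
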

\begin{proof}
Given $x,y\in \Lambda_{\ell}$ with $d(x,y)< 1$, let $A_n=\cA(x,n), B_n=\cA(y,n), \alpha_{2}=e^{\lambda_{i+1}+\varepsilon},\alpha_{1}=e^{\lambda_{i}-\varepsilon}, E=E_i^{-}(x),F=E_i^{-}(y),{E}'=E_i^{+}(x)$ and ${F}'=E_i^{+}(y)$. By Theorem \ref{T2} and the definition of  $\Lambda_{\ell}$, the conditions (i), (ii), (iii) of Lemma \ref{L5} hold for every $n\in \N$. Set $\delta=d(x,y)^{\nu}<1$. By Lemma \ref{L4}, there exists a sufficiently large constant $a$ such that $1>e^{\lambda_{i+1}+\varepsilon}/a$ and
$$||A_n-B_n||\leq a^n d(x,y)^{\nu}=a^n \delta$$
for each $n\in \N$.
Moreover, there exists ${n}'={n}'(\delta,a,i)\in \N$ such that
$$(\frac {e^{\lambda_{i+1}+\varepsilon}} {a})^{{n}'+1}\le\delta < (\frac {e^{\lambda_{i+1}+\varepsilon}}{a})^{{n}'}.$$
It follows from Lemma \ref{L5} that
\begin{equation} \label{eqa:EStable}
\begin{aligned}
\hat{d}(E_i^{-}(x),E_i^{-}(y))&\leq (4+2\ell)\ell^2 \frac{\alpha_{1}}{\alpha_{2}}\delta^{\log(\alpha_{2}/\alpha_{1})/\log(\alpha_{2}/a)}\\
&= (4+2\ell)\ell^2 e^{\lambda_{i}-\lambda_{i+1}-2\varepsilon} d(x,y)^{\frac{\nu(\lambda_{i+1}-\lambda_{i}+2\varepsilon)}{\lambda_{i+1}+\varepsilon-\log a}}\\
&= C_{i}^{-} d(x,y)^{\nu_{i}^{-}}
\end{aligned}
\end{equation}
where $C_{i}^{-}=(4+2\ell)\ell^2 e^{\lambda_{i}-\lambda_{i+1}-2\varepsilon}$ and $\nu_{i}^{-}=\nu(\lambda_{i}-\lambda_{i+1}-2\varepsilon)/(\log a-\lambda_{i+1}-\varepsilon )<\nu<1.$
\end{proof}
\begin{remark}
	Notice the hypotheses that 
$A(x)$ is invertible is not used in the estimate of $\hat{d}(E_i^{-}(x),E_i^{-}(y))$, and Theorem \ref{T2} is also valid for  semi-invertible cocycles (see \cite[Proposition 3.2]{Lucas17}). Thus, the conclusion of the above lemma is true for semi-invertible operators on a Banach space.
\end{remark}
	
\begin{lemma}\label{H+}
	For each $x,y\in \Lambda_{\ell}$ with $d(x,y)< 1$, we have
	$$\hat{d}(E_i^{+}(x),E_i^{+}(y))\leq C_{i}^{+} d(x,y)^{\nu_{i}^{+}}$$
	where $C_{i}^{+}$ and $\nu_{i}^{+}$ are two constants.
\end{lemma}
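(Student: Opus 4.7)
The plan is to rerun the cone-trapping argument of Lemma \ref{H-} with the \emph{backward} cocycle $\cA(\cdot,-n)$ in place of the forward one. In Case (1) the cocycle is invertible so this is available, and under backward iteration the roles of ``fast'' and ``slow'' subspaces are exchanged: $E_i^+(x)$ becomes the slowly growing direction, which is precisely the setting in which Lemma \ref{L5} applies, with $E_i^+$ now playing the role that $E_i^-$ played in Lemma \ref{H-}.

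The first step is to establish a backward analogue of Theorem \ref{T2}(1): on $\Lambda_\ell$ and for every $n\ge 0$,
\[
\|\cA(x,-n)v\|\le C'(x)\, e^{-n(\lambda_i-\varepsilon)}\|v\|\ \ (v\in E_i^+(x)),\quad
\|\cA(x,-n)u\|\ge C'(x)^{-1}\, e^{-n(\lambda_{i+1}+\varepsilon)}\|u\|\ \ (u\in E_i^-(x)),
\]
with a tempered constant $C'$. For $v\in E_i^+(x)$ both bounds follow by applying \eqref{eq:u1} at $f^{-n}(x)$ to the preimage vector $\cA(x,-n)v$ and summing over the $i$ summands of $v=v_1+\cdots+v_i$. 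For $u\in E_i^-(x)$ I would decompose $u=u_{i+1}+\cdots+u_k+u_F$: the inversion of \eqref{eq:u1} handles each finite-dimensional $E_j$--summand, and for the summand $u_F\in F(x)$ I would invoke invertibility together with $A(x)F(x)=F(f(x))$ to invert \eqref{eq:u2} applied at $f^{-n}(x)$, yielding $\|\cA(x,-n)u_F\|\ge C^{-1}e^{-n(\lambda_{k+1}+\varepsilon)}\|u_F\|$. Since $\lambda_{k+1},\lambda_j\le\lambda_{i+1}$ for all $j\ge i+1$ and the Lyapunov norm is additive over the splitting, summing the component bounds and converting between $\|\cdot\|$ and $\|\cdot\|_x$ via Lemma \ref{norm} produces the claimed uniform lower bound on $E_i^-(x)$. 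The splitting constant $K(x)\le\ell$ of Theorem \ref{T2}(2) is intrinsic to the decomposition $\cB=E_i^+(x)\oplus E_i^-(x)$ and needs no adjustment.

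With these backward estimates in hand I would set $A_n=\cA(x,-n)$, $B_n=\cA(y,-n)$, $E=E_i^+(x)$, ${E}'=E_i^-(x)$, $F=E_i^+(y)$, ${F}'=E_i^-(y)$, and rates $\alpha_2=e^{-(\lambda_i-\varepsilon)}<\alpha_1=e^{-(\lambda_{i+1}+\varepsilon)}$, where the inequality holds by the choice $\varepsilon<(\lambda_i-\lambda_{i+1})/100$. Since Lemma \ref{L4} is stated for all $n\in\Z$, the bound $\|A_n-B_n\|\le a^n d(x,y)^\nu$ is immediate, and after enlarging $a$ if necessary so that $a>\alpha_1$, setting $\delta=d(x,y)^\nu$ and choosing ${n}'$ with $(\alpha_2/a)^{{n}'+1}\le\delta<(\alpha_2/a)^{{n}'}$, Lemma \ref{L5} outputs
\[
\hat d(E_i^+(x),E_i^+(y))\le C_i^+\, d(x,y)^{\nu_i^+},
\]
in direct parallel with \eqref{eqa:EStable}, with $C_i^+$ and $\nu_i^+\in(0,\nu)$ depending only on $\ell$, $\lambda_i$, $\lambda_{i+1}$, $\varepsilon$ and $\log a$.

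The only genuinely new difficulty is the backward lower bound on the $F$--component of $u\in E_i^-(x)$, because Theorem \ref{EMT}(b) only guarantees a one-sided (forward) upper bound for the action along $F(x)$. Invertibility in Case (1) is precisely what rescues the argument: it lets the forward estimate at the preimage $f^{-n}(x)$ be inverted into the needed backward estimate at $x$, and this is the one place where the proof of Lemma \ref{H+} departs from that of Lemma \ref{H-}. Everything else—tempering of $C'$, the size of $a$ relative to the exponents, conversion between $\|\cdot\|$ and the Lyapunov norm—is routine bookkeeping.
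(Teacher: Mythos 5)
Your proposal is correct and follows essentially the same route as the paper: set $A_n=\cA(x,-n)$, $B_n=\cA(y,-n)$, observe that under backward iteration $E_i^+$ becomes the slowly growing direction, and then apply Lemma~\ref{L5} with the roles of $E_i^\pm$ swapped relative to Lemma~\ref{H-}.

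The only place your derivation diverges from the paper is in how the backward estimates are obtained. You re-derive them ``from scratch'' by decomposing $u\in E_i^-(x)$ into its $E_j$ and $F$ summands, inverting \eqref{eq:u1} on each finite-dimensional summand and inverting \eqref{eq:u2} on the $F$ summand, then summing via additivity of the Lyapunov norm. The paper's proof is shorter: it simply applies the already-established forward estimate of Theorem~\ref{T2}(1) at the preimage $f^{-n}(x)$ to the vector $\cA(x,-n)u\in E_i^-(f^{-n}(x))$ and inverts in one step, obtaining $\|\cA(x,-n)u\|\ge \ell^{-1}e^{-n(\lambda_{i+1}+2\varepsilon)}\|u\|$ directly, with no need to split into summands. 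Your observation that the $F$-component is the one genuinely new difficulty, and that invertibility is what permits inverting the forward bound along $F$, is correct and is implicit in the paper's argument. The one small slip in your sketch is that your stated rates $\alpha_2=e^{-(\lambda_i-\varepsilon)}$, $\alpha_1=e^{-(\lambda_{i+1}+\varepsilon)}$ do not yet absorb the tempering of the constant at $f^{-n}(x)$ (the $C(f^{-n}(x))\le e^{n\varepsilon}C(x)$ factor); once that factor is pushed into the exponent you recover the paper's $\alpha_2=e^{-\lambda_i+2\varepsilon}$, $\alpha_1=e^{-\lambda_{i+1}-2\varepsilon}$ and a genuine constant $\ell$ in place of the $x$-dependent $C'(x)$, which is required before Lemma~\ref{L5} can be invoked. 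You flag this as ``routine bookkeeping,'' and indeed it is, but it is worth carrying out since Lemma~\ref{L5} needs a fixed $\ell$, not a tempered function.
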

\begin{proof}
In order to estimate $\hat{d}(E_i^{+}(x),E_i^{+}(y))$ for every $x,y\in \Lambda_{\ell}$ with $d(x,y)< 1$, set $A_n=\cA(x,-n),B_n=\cA(y,-n)$ for every $n\in \mathbb{N}$, $\alpha_{2}=e^{-\lambda_{i}+2\varepsilon},\alpha_{1}=e^{-\lambda_{i+1}-2\varepsilon}, E=E_i^{+}(x),F=E_i^{+}(y),{E}'=E_i^{-}(x)$ and ${F}'=E_i^{-}(y)$. Take $u\in E'$, since  $\cA(x,-n)=(\cA(f^{-n}(x),n))^{-1}$, it follows from Theorem \ref{T2} that
$$\begin{aligned}
||u||&=||\mathcal{A}(f^{-n}(x),n)(\mathcal{A}(x,-n) u)||\\
&\leq C(f^{-n}(x))e^{\lambda_{i+1}+\varepsilon}||\mathcal{A}(x,-n) u||\\
&\le e^{n\varepsilon}\ell e^{\lambda_{i+1}+\varepsilon}||\mathcal{A}(x,-n) u||
\end{aligned}
$$
 where we use the facts that $C(f^{-n}(x)) \leq e^{n\varepsilon}C(x)$ and $C(x)\leq \ell$. Consequently,  one has
$$||A_n u||\geq  \ell^{-1} e^{n(-\lambda_{i+1}-2\varepsilon)} ||u|| = \ell^{-1} \alpha_{1}^n||u||.$$
Similarly, one can show that $||A_n v|| \leq  \ell \alpha_{2}^n||v||$ for every $v\in {E}$.
Thus, the condition (i) of Lemma \ref{L5} holds. Replace $x$ with $y$, one can show that the condition (ii) of Lemma \ref{L5} holds in similar fashion, and the condition (iii) follows from (2) of Theorem \ref{T2}. Let $\delta=d(x,y)^{\nu}<1$ and let the constant $a$ be as  Lemma \ref{L5}. It follows from Lemma \ref{L5} that
\begin{equation} \label{eqa:EUnstable}
\hat{d}(E_i^{+}(x),E_i^{+}(y))\leq C_{i}^{+} d(x,y)^{\nu_{i}^{+}}.
\end{equation}
where $C_{i}^{+}=(4+2\ell)\ell^2 e^{\lambda_{i}-\lambda_{i+1}-4\varepsilon}$ and $\nu_{i}^{+}=\nu(\lambda_{i}-\lambda_{i+1}-4\varepsilon)/(\log a+\lambda_{i}-2\varepsilon )<\nu<1.$
\end{proof}
	
	\subsection{H\"{o}lder continuity of the map $x\mapsto E_i(x)$}

	In this section, we will give the proof of  the main result of this paper.
	
	For every $\gamma>0$, fix $\Lambda_\ell$ so that $\mu(\Lambda_\ell)>1-\gamma$.
Take a point $x\in \Lambda_\ell$, then  $\cB=E_i^{-}(x) \oplus E_i^{+}(x)$, here $E_i^{-}(x),E_i^{+}(x)$ are the same as in Theorem \ref{T2}. By \eqref{eqa:EUnstable}, choose  a small number $\delta_1\in(0,1)$ such that
	$$\hat{d}(E_i^{+}(x),E_i^{+}(y))\leq 1/\ell$$
	for each $x,y\in \Lambda_{\ell}$ with $d(x,y)<\delta_1$.
	By the definition of $\Lambda_\ell$ and \eqref{norm-proj2},  the norm of the projection operator $\pi_{i}^{+}(x): E_i^{+}(x) \oplus E_i^{-}(x) \rightarrow E_i^{+}(x)$ is no larger than $\ell$, thus
	$$\hat{d}(E_i^{+}(x),E_i^{+}(y))\leq 1/\ell < ||\pi_{i}^{+}(x)||^{-1}.$$
	By Lemma \ref{lem Yong}, this yields that $\cB=E_i^{+}(y) \oplus E_i^{-}(x)$. So, there exists a
	linear operator $L_{x,y}:E_i^{+}(x)\rightarrow E_i^{-}(x)$ such that the graph of $L_{x,y}$
	is equivalent to the subspace $E_i^{+}(y)$, that is,
	$$E_i^{+}(y)=\{u+L_{x,y}(u): u\in E_i^{+}(x)\}.$$
	\begin{lemma}\label{graph}
		For each $x,y\in \Lambda_{\ell}$ with $d(x,y)<\delta_1$, we have
		\begin{equation}\label{bound-subsp}
			\dfrac{||L_{x,y}||}{\ell(1+||L_{x,y}||)}\leq \hat{d}(E_i^{+}(x),E_i^{+}(y)) \leq 2\ell||L_{x,y}||.
		\end{equation}
	\end{lemma}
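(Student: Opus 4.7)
The plan is to exploit the explicit graph representation $E_i^+(y) = \{u + L_{x,y}(u) : u \in E_i^+(x)\}$, together with the projection bound $\max\{||u||, ||w||\} \leq \ell ||u+w||$ valid for $u \in E_i^+(x)$, $w \in E_i^-(x)$ (which follows from (2) of Theorem \ref{T2} combined with the definition of $\Lambda_\ell$). The two-sided inequality \eqref{eq:gap} then converts gap estimates into distance estimates.

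For the upper bound $\hat{d}(E_i^+(x), E_i^+(y)) \leq 2\ell ||L_{x,y}||$, I would first bound the gap $\hat\delta$ and then invoke $\hat d \leq 2\hat\delta$. For a unit vector $u \in E_i^+(x)$ the vector $u + L_{x,y}(u)$ lies in $E_i^+(y)$, so $\dist(u, E_i^+(y)) \leq ||L_{x,y}(u)|| \leq ||L_{x,y}||$, giving $\delta(E_i^+(x), E_i^+(y)) \leq ||L_{x,y}||$. Conversely, any unit $v \in E_i^+(y)$ can be written as $v = u + L_{x,y}(u)$ with $u \in E_i^+(x)$ and $L_{x,y}(u) \in E_i^-(x)$; the projection bound forces $||u|| \leq \ell ||v|| = \ell$, whence $\dist(v, E_i^+(x)) \leq ||L_{x,y}(u)|| \leq \ell ||L_{x,y}||$. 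Combining, $\hat\delta(E_i^+(x), E_i^+(y)) \leq \ell ||L_{x,y}||$, and the upper bound follows from \eqref{eq:gap}.

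For the lower bound, I would work with $\delta(E_i^+(y), E_i^+(x))$, using the chain $\hat{d} \geq \hat\delta \geq \delta(E_i^+(y), E_i^+(x))$. Given $\epsilon > 0$, pick $u \in E_i^+(x)$ with $||u|| = 1$ and $||L_{x,y}(u)|| \geq ||L_{x,y}|| - \epsilon$, and set $v = u + L_{x,y}(u) \in E_i^+(y)$, so that $||v|| \leq 1 + ||L_{x,y}||$. For any $w \in E_i^+(x)$, the decomposition $v - w = (u - w) + L_{x,y}(u)$ respects the splitting $E_i^+(x) \oplus E_i^-(x)$, and the projection bound yields $||v - w|| \geq \ell^{-1} ||L_{x,y}(u)||$. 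Rescaling $v$ to the unit sphere (dividing by $||v||$) and letting $\epsilon \to 0$ produces
$$\delta(E_i^+(y), E_i^+(x)) \geq \frac{||L_{x,y}||}{\ell(1 + ||L_{x,y}||)},$$
which is the claimed estimate.

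The main conceptual obstacle is really only bookkeeping: because $\hat d$ is defined as a Hausdorff distance between unit spheres (not between subspaces), the lower bound must use a normalized witness vector, and it is precisely this normalization that injects the extra factor $(1 + ||L_{x,y}||)^{-1}$ on the right-hand side. Everything else is a direct unpacking of the definitions of $\delta$, $\hat\delta$ and $\hat d$ together with Theorem \ref{T2}(2) and the definition of $\Lambda_\ell$.
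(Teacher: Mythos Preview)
Your proof is correct and follows essentially the same route as the paper: both the upper and lower bounds are obtained by combining the graph representation of $E_i^+(y)$ with the projection estimate from Theorem~\ref{T2}(2) and the inequality \eqref{eq:gap}. The only cosmetic difference is in the lower bound, where you construct an explicit near-maximizing witness $u$ and normalize, while the paper fixes an arbitrary $\beta>\delta(E_i^+(y),E_i^+(x))$ and shows $\beta$ dominates $\|L_{x,y}\|/(\ell(1+\|L_{x,y}\|))$; these are the same argument in contrapositive form.
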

	\begin{proof}
		For simplicity of presentation,  denote $L_{x,y}$ by $L$. First, for each $u\in E_i^{+}(x)$, the fact that $u+Lu \in  E_i^{+}(y)$ implies
		$$\dist(u,E_i^{+}(y))\leq ||u-(u+L u)||\leq ||L||\cdot ||u||.$$
		Then, we have that $\delta(E_i^{+}(x),E_i^{+}(y))\leq ||L||$.
Next, given $v\in E_i^{+}(y)$, there exist $u\in  E_i^{+}(x)$ such that $v=u+Lu$. Since $u\in E_i^{+}(x), Lu\in E_i^{-}(x)$, it follows from (2) of Theorem \ref{T2} that $||u||\leq \ell||v||$. Thus, one has
		$$\dist(v,E_i^{+}(x))\leq ||v-u||\leq ||L||\cdot||u||\leq \ell||L||\cdot||v||.$$
		Therefore, $\delta(E_i^{+}(y),E_i^{+}(x))\leq \ell\cdot||L||$. This together with \eqref{eq:gap} yield that 		 $$\hat{d}(E_i^{+}(y),E_i^{+}(x))\leq 2\hat{\delta}(E_i^{+}(y),E_i^{+}(x)) \leq 2\ell||L||.$$
		
		To prove the other inequality in \eqref{bound-subsp},  note that for each $\beta> \delta(E_i^{+}(y),E_i^{+}(x))$ and for every $u\in E_i^{+}(x)\setminus\{0\}$, one has
		$$\dist(u+Lu, E_i^{+}(x))<\beta \cdot ||u+Lu||.$$
		Fix such a $u\in E_i^{+}(x)\setminus\{0\}$, there exists ${u}'\in E_i^{+}(x)$
		such that $||u+Lu-{u}'||< \beta \cdot ||u+Lu||$. Since $u-{u}'\in E_i^{+}(x)$, $Lu\in  E_i^{-}(x)$ and
 $K(x)\leq \ell$, by  (2) of Theorem \ref{T2}  we have that
		$$||Lu||< \ell ||u-{u}'+Lu||< \ell \beta \cdot ||u+Lu||< \ell \beta(1+||L|| )\cdot  ||u||.$$
		Thus, we conclude that
		$$\beta> \dfrac{||Lu||}{\ell(1+||L||)\cdot||u||}$$
		for  every $u \in E_i^{+}(x)\setminus\{0\}$. This implies that
		$$\beta\geq \frac{||L||}{\ell(1+||L||)}.$$
		By the arbitrariness of  $ \beta $ and \eqref{eq:gap}, one has
		$$\hat{d}(E_i^{+}(x),E_i^{+}(y))\geq \delta(E_i^{+}(x),E_i^{+}(y)) \geq \frac{||L||}{\ell(1+||L||)}. $$
This completes the proof of the lemma.
	\end{proof}

	Next, we shall prove Theorem \ref{main}.
	\begin{proof}[Proof of Theorem \ref{main}]
		By Lemma \ref{graph}, we have that
		$$|| L_{x,y} ||\rightarrow 0 \ \text{whenever} \ d(x,y)\rightarrow 0.$$
		Hence, there exists $\delta_2\in (0,\delta_1)$ with $\delta_2^{\nu}<1/4$ such that $||L_{x,y}||<1/2$ for any $x,y \in \Lambda_{\ell}$ with $d(x,y)<\delta_2$.
		
		Fix $x,y \in \Lambda_{\ell}$ with $d(x,y)<\delta_2$. Let $\Phi_{x,y}=Id+L_{x,y}$ be the isomorphism from $E_i^{+}(x)$ to $E_i^{+}(y)$. It is easy to see that $||\Phi_{x,y}||\leq 1+||L_{x,y}||$. Since $||L_{x,y}||<1/2$, we obtain that
		$\Phi_{x,y}^{-1}=Id + \sum_{k=1}^{\infty} (-L_{x,y})^{k}$. We write it as $\Phi_{x,y}^{-1}=Id + \hat{L}_{x,y}$, where $\hat{L}_{x,y}:E_i^{+}(y)\rightarrow E_i^{-}(x) $ and one can show that $||\hat{L}_{x,y}|| \leq  ||L_{x,y}||(1-||L_{x,y}||)^{-1}$.
		
		For simplicity of notations, denote  $L_{x,y}, \hat{L}_{x,y}$ and $\Phi_{x,y}^{\pm}$ by  $L, \hat{L}$ and $\Phi^{\pm}$ respectively. Note that
		$$E_i^{+}(x)=E_{i}(x) \oplus E_{i-1}^{+}(x)=\Phi^{-1} E_{i}(y) \oplus \Phi^{-1} E_{i-1}^{+}(y).$$
		By the triangle inequality we have
		\begin{equation}\label{eq:tri}
			\hat{d}(E_i(x),E_i(y)) \leq \hat{d}(E_i(x),\Phi^{-1} E_{i}(y))+\hat{d}(\Phi^{-1} E_{i}(y), E_i(y)).
		\end{equation}
		
		We first estimate $\hat{d}(\Phi^{-1} E_{i}(y), E_i(y))$ in \eqref{eq:tri}. For each $u\in  E_i(y)$ with $||u||=1$, we have that
		$$\dist(u,\Phi^{-1} E_{i}(y)) \leq ||u-\Phi^{-1}u||\leq ||\hat{L} u||\leq \dfrac{||L||}{1-||L||}\leq 2||L||.$$
		Similarly, for each $v \in  \Phi^{-1} E_{i}(y)$ with $||v||=1$ one has that
		$$\dist(v,E_{i}(y)) \leq ||v -\Phi v||\leq ||Lv||\leq ||L||.$$
		Consequently, we obtain that  $\hat{\delta}(\Phi^{-1} E_{i}(y), E_i(y))\leq2||L||$. By \eqref{eq:gap} and Lemma \ref{graph}, we have
		\begin{equation*}
			\hat{d}(\Phi^{-1} E_{i}(y), E_i(y))\leq 4||L|| \leq 4\ell(1+||L||)\hat{d}(E_i^{+}(x),E_i^{+}(y))\leq 6\ell\hat{d}(E_i^{+}(x),E_i^{+}(y))
		\end{equation*}
where the last inequality use the fact that $\|L\|\le 1$.
		Combining the above inequality and \eqref{eqa:EUnstable}, we have that
		\begin{equation}\label{eq:sec}
			\hat{d}(\Phi^{-1} E_{i}(y), E_i(y))\leq 6\ell
			C_{i}^{+} d(x,y)^{\nu_{i}^{+}}.
		\end{equation}
		
		Next we estimate $\hat{d}(E_i(x),\Phi^{-1} E_{i}(y))$.  Let $A_n=\cA(x,n), B_n=\cA(y,n) \circ \Phi,E=E_{i}(x),F=\Phi^{-1} E_{i}(y)$ and ${E}'=E_{i-1}^{+}(x), {F}'=\Phi^{-1} E_{i-1}^{+}(y)$.
		
		For each $u\in F=\Phi^{-1}E_{i}(y)$ and each $n\in \N$, by (1) of Theorem \ref{T2}
		\begin{align}\label{B1}
\begin{split}
			|| B_n u ||=||\cA(y,n)  \Phi u||&\leq \ell e^{(\lambda_{i}+\varepsilon)n} \cdot || \Phi u ||\\
			&\leq ||\Phi||  \ell e^{(\lambda_{i}+\varepsilon)n} \cdot || u ||\\
			&\leq (1+||L||) \ell e^{(\lambda_{i}+\varepsilon)n} \cdot || u || \\
			&\leq 2 \ell e^{(\lambda_{i}+\varepsilon)n} \cdot || u ||
\end{split}
		\end{align}
		where we use the fact that $||L||<1/2$ in the last inequality.
		
		For each $u\in {F}'=\Phi^{-1} E_{i-1}^{+}(y)$ and each $n\in \N$, by (1) of  Theorem \ref{T2} we have
		\begin{align}\label{B2}
\begin{split}
			|| B_n u ||&=||\cA(y,n)  \Phi u||\\
&\geq  \ell^{-1} e^{(\lambda_{i-1}-\varepsilon)n} \cdot || \Phi u ||\\
			&\geq ||\Phi^{-1}||^{-1}  \ell^{-1} e^{(\lambda_{i-1}-\varepsilon)n} \cdot || u ||\\
			&\geq (1-||L||)  \ell^{-1} e^{(\lambda_{i-1}-\varepsilon)n} \cdot || u || \\
			&\geq (2 \ell)^{-1} e^{(\lambda_{i-1}-\varepsilon)n} \cdot || u ||
\end{split}
		\end{align}
where the third inequality uses the fact that  $||\Phi^{-1}||\leq 1+||\hat{L}|| \leq (1-||L||)^{-1}$.
		
		For every $u\in F, v\in {F}'$, then $ \Phi u \in E_{i}(y),  \Phi v \in E_{i-1}^{+}(y)$ and, by (2) of Theorem \ref{T2} one has that
		$$\ell|| \Phi u + \Phi v ||\geq \max\{ || \Phi u ||,|| \Phi v ||\}.$$
		This yields that
		$$\ell||\Phi|| \cdot ||u+v|| \geq ||\Phi^{-1}||^{-1} \cdot \max\{||u||,||v||\}.$$
		Since $ ||L||<1/2 $, we have that $||\Phi||\cdot ||\Phi^{-1}||\leq (1+||L||)(1-||L||)^{-1} \leq 3$. Therefore, one has that
		\begin{eqnarray} \label{B3}
\max\{||u||,||v||\}\le 3\ell ||u+v||.
\end{eqnarray}
	Consider $\alpha_{1}=e^{\lambda_{i-1}-\varepsilon},\alpha_{2}=e^{\lambda_{i}+\varepsilon}$ and replace $\ell$ by $3\ell$.	 Clearly, the sequence $\{A_n\}$ satisfies condition (i) of Lemma \ref{L5}, and it follows from \eqref{B1}, \eqref{B2} and \eqref{B3}  that the condition  (ii) and (iii) of Lemma \ref{L5} hold for every $n\in \N$.

		At last, consider a constant $a>\sup_{x\in X}||A(x)||$ as in  Lemma \ref{L4}, one can show that
		\begin{align*}
			||A_n-B_n||&\leq ||\cA(x,n)- \cA(x,n) \Phi||+||(\cA(x,n)-\cA(y,n))\Phi||\\
			&\leq ||\cA(x,n)||\cdot ||L||+||\cA(x,n)-\cA(y,n) ||\cdot (1+||L||)\\
			&\leq a^n ||L||+ 2a^n d(x,y)^\nu\\
			&\leq a^n (||L||+2d(x,y)^\nu).
		\end{align*}
		Let $\delta=||L||+2d(x,y)^\nu<1$, since $a>\alpha_{1}>\alpha_2$ there exist $\hat{n}\in \N$ such that
		$$(\frac{\alpha_2}{a})^{\hat{n}+1}\le \delta < (\frac{\alpha_2}{a})^{\hat{n}}.$$
		It follows from Lemma \ref{L5} that
		\begin{equation}\label{pre-h}
			\hat{d}(E_i(x),\Phi^{-1} E_{i}(y)) \leq \hat{C}_i (||L||+2d(x,y)^\nu)^{\hat{\nu}_{i}}
		\end{equation}
		where $\hat{C}_i=(4+6\ell)(3\ell)^2 e^{\lambda_{i-1}-\lambda_{i}-2\varepsilon}$ and $\hat{\nu}_{i}= (\lambda_{i-1}-\lambda_{i}-2\varepsilon)/(\log a-\lambda_{i}-\varepsilon )<1$. By Lemma \ref{graph} and \eqref{eqa:EUnstable}, there exist  constants $C_{i}^{+}>0$ and $\nu_{i}^{+}\in(0,\nu )$ so that
		$$||L||\leq \ell(1+||L||)\hat{d}(E_i^{+}(x),E_i^{+}(y))\leq 2\ell C_{i}^{+} d(x,y)^{\nu_{i}^{+}}.$$
Since $\nu_{i}^{+}<\nu $ implies $d(x,y)^{\nu_{i}^{+}}\geq d(x,y)^{\nu}$, this together with \eqref{pre-h} one has
		\begin{equation}\label{eq:fst}
			\hat{d}(E_i(x),\Phi^{-1} E_{i}(y)) \leq 2\hat{C}_i(\ell C_{i}^{+}+1)d(x,y)^{\nu_{i}^{+} \hat{\nu}_{i}}.
		\end{equation}
		Combining \eqref{eq:tri}, \eqref{eq:sec} and \eqref{eq:fst},  we have that
		\begin{equation}
			\begin{aligned}
				\hat{d}(E_i(x),E_i(y))&\leq 6\ell C_{i}^{+} d(x,y)^{\nu_{i}^{+}}+ 2\hat{C}_i(\ell C_{i}^{+}+1)d(x,y)^{\nu_{i}^{+}\hat{\nu}_{i}}\\
				&\leq C_{i} d(x,y)^{\nu_{i}}
			\end{aligned}
		\end{equation}
where $\nu_{i}=\nu_{i}^{+}\hat{\nu}_{i}$ and $C_i=6\ell C_{i}^{+}+ 2\hat{C}_i(\ell C_{i}^{+}+1)$.
		This completes the proof.
	\end{proof}

   \section{The  non-invertible case}\label{non-inert}
    In this section, assume that $f$ is a non-invertible Lipschitz map on a compact metric space $(X,d)$, and $\mu$ is an $f$-invariant ergodic Borel probability measure on $X$.

    Let $\cB$ be a separable Banach space, and let $L(\cB)$ denote the space of all bounded linear operators on $\cB$. Let $A:X\rightarrow L(\cB)$ be a $\nu$-H\"{o}lder continuous operator valued function, i.e., there exist constants $C$ and $\nu$ such that $||A(x)-A(y)||\leq Cd(x,y)^{\nu}$. In addition, suppose that $\alpha(A,\mu)<\lambda(A,\mu)$, i.e.,  $A$ is quasi-compact (see the definitions in Section \ref{LC}).

    The one-sided Oseledets theorem was obtained by Doan \cite[Theorem 7.1.7]{Doan09} that we will recall in below, see \cite{Lian10} for the  two-sided case.
    \begin{theorem} \label{One MET}
    Let $f$ be a continuous map on a compact metric space $(X,d)$, and let $\mu$ be an $f$-invariant ergodic Borel probability measure. Given a linear cocycle $\cA$ over $f$ generated by a quasi-compact and strongly measurable operator valued function $A:X \rightarrow L(\cB)$. Then there exists a $f$-invariant subset $X_0\subset X$ of  full $\mu$-measure such that there exist $k$ ($k\in \N \cup \infty$) numbers $\lambda_{1}>\cdots>\lambda_{k}>\alpha(A,\mu)$, for every $x\in X_0$ there is a filtration
    $$\cB=V_1(x)\supset V_2(x) \supset \cdots \supset V_{k+1}(x)$$
    with the following properties:
    \begin{enumerate}
        \item[(a)] if $k<\infty$, let $\lambda_{k+1}=\alpha(A,\mu)$, for each $i\in \{1,\cdots,k\}$, $V_{i}(x)$ is a closed, finite-codimensional subspace and co-$\dim V_i(x)=m_i$ is a finite constant. Moreover, $A(x)V_{i}(x)\subset V_{i}(f(x))$, and for every $u\in V_{i}(x)\setminus V_{i+1}(x)$ we have that
    		$$ 
    		\lim_{n\rightarrow \infty} \frac{1}{n}\log ||\cA(x,n)u||=\lambda_{i}\ \text{and}\   	 \limsup_{n\rightarrow \infty} \frac{1}{n}\log ||\cA(x,n)|_{V_{i+1}(x)}||\le \lambda_{i+1}; $$
    	\item[(b)] if $k=\infty$ then $\displaystyle{\lim_{i\rightarrow \infty}\lambda_{i}=\alpha(A,\mu)}$, and $V_{i}(x)$  is a closed, finite-codimensional and co-$\dim V_i(x)=m_i$ is a finite constant for each $i\in \N$. Moreover, $A(x)V_{i}(x)\subset V_{i}(f(x))$, and for every $u\in V_{i}(x)\setminus V_{i+1}(x)$ we have that
    	$$ 
    	\lim_{n\rightarrow \infty} \frac{1}{n}\log ||\cA(x,n)u||=\lambda_{i} \  \text{and}\  	 \limsup_{n\rightarrow \infty} \frac{1}{n}\log ||\cA(x,n)|_{V_{i+1}(x)}||\le \lambda_{i+1}.$$
    \end{enumerate}
    \end{theorem}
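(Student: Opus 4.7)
The plan is to follow the classical Ruelle--Ma\~n\'e--Thieullen strategy for multiplicative ergodic theorems on Banach spaces, constructing the filtration via ``slow subspaces'' indexed by growth rate and exploiting quasi-compactness to control their codimension. First I would invoke Kingman's subadditive ergodic theorem applied to $n\mapsto\log\|\cA(x,n)\|$ and to $n\mapsto\log\|\cA(x,n)\|_{\alpha}$ (both subadditive, the latter by submultiplicativity of $\|\cdot\|_{\alpha}$), yielding the a.e.\ constants $\lambda(A,\mu)$ and $\alpha(A,\mu)$ with a strict gap by the quasi-compactness hypothesis.

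Next, for each $\lambda\in(\alpha(A,\mu),\lambda(A,\mu)]$, I would introduce the candidate slow subspace
$$V(x,\lambda):=\left\{v\in\cB:\limsup_{n\to\infty}\tfrac{1}{n}\log\|\cA(x,n)v\|\leq\lambda\right\}.$$
Closedness is a routine Fatou-type argument using the uniform sub-exponential bound on $\|\cA(x,n)\|$, and the cocycle identity immediately yields $A(x)V(x,\lambda)\subset V(f(x),\lambda)$. The crucial step is to establish $\operatorname{codim}V(x,\lambda)<\infty$ for every $\lambda>\alpha(A,\mu)$: were it infinite, one could extract arbitrarily many unit vectors in $\cB/V(x,\lambda)$ whose iterates under $\cA(x,n)$ grow faster than $e^{n\lambda}$, yet $\cA(x,n)(S_{\cB})$ is coverable by finitely many balls of radius $\approx e^{n(\alpha(A,\mu)+\varepsilon)}$ by the very definition of $\|\cdot\|_{\alpha}$, forcing a packing contradiction as soon as $\lambda>\alpha(A,\mu)+\varepsilon$. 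I expect this finite-codimensionality argument to be the main obstacle, since a general Banach space admits no orthogonal projections and the separation of the fast directions must be carried out via quotient norms and careful covering estimates rather than the clean Hilbertian machinery.

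Once the monotone step function $\lambda\mapsto\operatorname{codim}V(x,\lambda)$ is known to be finite-valued, its (at most countably many) discontinuities supply the distinct Lyapunov exponents $\lambda_1>\lambda_2>\cdots$ and the filtration is $V_i(x):=V(x,\lambda_i)$, with codimensions $m_i$ constant by ergodicity of $\mu$ combined with $f$-invariance of the measurable map $x\mapsto\operatorname{codim}V_i(x)$. To promote the $\limsup$ appearing in the definition to a genuine limit for $u\in V_i(x)\setminus V_{i+1}(x)$, I would exploit the forward invariance of $V_{i+1}$ to pass to the induced cocycle on the finite-dimensional quotient $\cB/V_{i+1}(x)$; applying the finite-dimensional Oseledets theorem on this quotient produces exact growth rates lying in $\{\lambda_1,\dots,\lambda_i\}$, and since the non-zero projection of $u$ must grow at most like $e^{n\lambda_i}$ (by $u\in V_i$) while bounding $\|\cA(x,n)u\|$ from below, it follows that $\lim\frac{1}{n}\log\|\cA(x,n)u\|=\lambda_i$.

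Case (b), with $k=\infty$, requires no further ingredients: the codimension function then possesses infinitely many jumps in $(\alpha(A,\mu),\lambda(A,\mu)]$, and since no subinterval $[\alpha(A,\mu)+\delta,\lambda(A,\mu)]$ can carry more than finitely many (codim being finite there), the sequence $\lambda_i$ necessarily accumulates at $\alpha(A,\mu)$. A final routine step intersects the countable family of full-measure sets produced at each level to obtain the $f$-invariant set $X_0$ on which every conclusion of the theorem holds simultaneously.
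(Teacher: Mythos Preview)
The paper does not prove this theorem at all: it is quoted verbatim as a known result, attributed to Doan \cite[Theorem 7.1.7]{Doan09}, and used without further argument. There is therefore no ``paper's own proof'' to compare your proposal against.

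Your sketch is a faithful outline of the classical Ruelle--Ma\~n\'e--Thieullen construction that underlies Doan's treatment, so in that sense it is aligned with the cited source rather than with anything original to this paper. The main steps you identify --- subadditive ergodic theorem for $\lambda(A,\mu)$ and $\alpha(A,\mu)$, slow subspaces $V(x,\lambda)$, finite codimension via a covering/packing contradiction from $\|\cdot\|_{\alpha}$, extraction of the exponents from the jumps of $\lambda\mapsto\operatorname{codim}V(x,\lambda)$, and upgrading $\limsup$ to $\lim$ by passing to the finite-dimensional quotient $\cB/V_{i+1}(x)$ --- are all standard and correct in spirit. The finite-codimensionality step is indeed the delicate one in a general Banach space, and your packing heuristic is the right idea, though a rigorous version requires more care (one typically works with an auxiliary sequence of approximating finite-rank operators or with Riesz-type lemmas on the quotient). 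If you intend this as a self-contained proof rather than a reference, that step would need to be fleshed out substantially; as a summary of the literature it is adequate.
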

   \begin{remark}
   	Notice that the continuity of $A:X\to L(\mathcal{B})$  implies that $A$ is strongly measurable, and the map $x\mapsto V_{i}(x)$ is measurable (see \cite[Remark 2.9]{Quas12}). 
   \end{remark}
   
 
  In the following, we will show that the map $x\mapsto V_{i}(x)$ is (locally) H\"{o}lder continuous on a compact set of arbitrarily large measure provided that $f$ is Lipschitz and $A:X\rightarrow L(\cB)$ is H\"{o}der continuous.

  \begin{theorem}\label{main2}
  	Let $f$ be a Lipschitz map on a compact metric space $(X,d)$, and let $\mu$ be an $f$-invariant ergodic Borel probability measure. Given a linear cocycle $\cA$ over $f$ generated by a quasi-compact and $\nu$-H\"{o}der continuous operator valued function $A:X \rightarrow L(\cB)$, where $(\cB,||\cdot||)$ is a separable Banach space. Then, for every $\gamma>0$ and every subspace $V_{i}(x)$ as in Theorem \ref{One MET}, there exists a compact subset $\Lambda_\gamma\subset X$ with $\mu(\Lambda_\gamma)>1-\gamma$ so that the map $x\mapsto V_{i}(x)$ is (locally) H\"{o}lder continuous on $\Lambda_\gamma$.
  \end{theorem}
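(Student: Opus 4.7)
The plan is to extend the argument from the invertible case, with the filtration element $V_{i+1}(x)$ playing the role of the ``stable'' subspace $E_i^{-}(x)$ from Theorem \ref{T2}. Crucially, the remark following Lemma \ref{H-} already observes that its estimate is semi-invertible in nature, using only forward iterates together with exponential decay on the stable piece and exponential expansion on a complementary fast piece. Thus the proof of Theorem \ref{main2} reduces to providing, for each $i$ and each $x$ on a regular set, a finite-dimensional ``fast'' complement $W_i(x)$ to $V_{i+1}(x)$ with tempered projection bounds and tempered one-sided cocycle growth; once this is in hand, Lemma \ref{L5} applies verbatim to the pair $(V_{i+1}(x),V_{i+1}(y))$.

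To construct $W_i(x)$, I would pass to the natural extension $(\tilde X,\tilde f,\tilde\mu)$, which is invertible, and lift the cocycle by $\tilde A(\tilde x):=A(\pi\tilde x)$. The semi-invertible multiplicative ergodic theorem of \cite{Lian10, Quas12} furnishes at $\tilde\mu$-a.e.\ $\tilde x$ a measurable splitting $\cB=\tilde E_1(\tilde x)\oplus\cdots\oplus\tilde E_k(\tilde x)\oplus\tilde F(\tilde x)$. The filtration element $\tilde F(\tilde x)\oplus\bigoplus_{j>i}\tilde E_j(\tilde x)$ is characterised purely by forward exponential decay and therefore descends through $\pi$, coinciding with $V_{i+1}(x)$. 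Choose a measurable section $\sigma:X\to\tilde X$ of $\pi$ (which exists by the Kuratowski--Ryll--Nardzewski selection theorem) and set $W_i(x):=\bigoplus_{j\le i}\tilde E_j(\sigma(x))$. Then $\cB=W_i(x)\oplus V_{i+1}(x)$, and on $W_i(x)$ the cocycle satisfies $\|\cA(x,n)u\|\ge \tilde C(x)^{-1}e^{(\lambda_i-\varepsilon)n}\|u\|$ for every $n\ge 0$. Note that $W_i$ need not be forward invariant, but Lemma \ref{L5} only requires pointwise one-sided growth estimates, so equivariance is unnecessary.

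Next, I would introduce a Lyapunov norm adapted to the two-block splitting $\cB=W_i(x)\oplus V_{i+1}(x)$, built only from forward iterates as in Section \ref{proof}, and apply Lemma \ref{L3} to obtain a tempered bound $K(x)$ on the associated projection norms. Defining the compact regular set $\Lambda_\ell:=\{x:C(x),\tilde C(x),K(x)\le\ell\}$, one has $\mu(\Lambda_\ell)>1-\gamma$ for $\ell$ sufficiently large, and on $\Lambda_\ell$ the hypotheses of Lemma \ref{L5} are fulfilled uniformly with $\alpha_1=e^{\lambda_i-\varepsilon}$ and $\alpha_2=e^{\lambda_{i+1}+\varepsilon}$.

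Combining Lemma \ref{L5} with the cocycle Hölder estimate of Lemma \ref{L4} (whose proof does not use invertibility of $A$) yields $\hat{d}(V_{i+1}(x),V_{i+1}(y))\le C d(x,y)^{\omega_i}$ for $x,y\in\Lambda_\ell$ sufficiently close, mirroring the proof of Lemma \ref{H-} almost verbatim. Running this for every $i$ (and passing to a common compact set by intersecting finitely many $\Lambda_\ell$, or by diagonalising in the case $k=\infty$) gives Hölder continuity of the entire filtration $V_1(x)\supset\cdots\supset V_{k+1}(x)$. The principal obstacle is the construction of the fast complements $W_i(x)$: in \cite{Froyland18} this is handled by orthogonal projections and singular value decomposition, which are unavailable in a general Banach space. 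The natural-extension construction above, combined with the tempered complementation provided by Lemma \ref{L3}, is the Banach-space substitute.
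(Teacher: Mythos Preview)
Your natural-extension route is genuinely different from the paper's and can be made to work, but as written it contains two real gaps, and the paper's argument is more direct.

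\textbf{What the paper does.} It never leaves the one-sided setting. Instead of lifting to $(\tilde X,\tilde f,\tilde\mu)$, it invokes \cite[Lemma~2.11]{Quas12} (recorded as Lemma~\ref{L41}): for each $i$ there is a measurable finite-dimensional complement $U_i(x)$ of $V_i(x)$ with the projections $\pi_i^u,\pi_i^v$ in $L^\infty(\mu)$---a \emph{uniform} bound, not merely tempered. The cocycle is block-triangularised along $\cB=U_i(x)\oplus V_i(x)$ as $A(x)=B(x)+C(x)+D(x)$; the finite-dimensional multiplicative ergodic theorem applied to the $U_i$-block $\{B_n(x)\}$ yields the uniform lower growth bound on $U_i(x)$ (Lemma~\ref{L42}). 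The regular set $\Lambda_\gamma$ is then cut out directly by these growth estimates, and Lemma~\ref{L5} is applied with $E=V_i(x),F=V_i(y),E'=U_i(x),F'=U_i(y)$. No Lyapunov norm, no Lemma~\ref{L3}, no inverse limit.

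\textbf{Gaps in your proposal.} First, an arbitrary Kuratowski--Ryll-Nardzewski section $\sigma:X\to\tilde X$ need not land in the $\tilde\mu$-full-measure Oseledets set $\tilde\Lambda$: the push-forward $\sigma_*\mu$ is typically singular with respect to $\tilde\mu$, so $\tilde E_j(\sigma(x))$ may be undefined $\mu$-a.e. You must first use Rokhlin disintegration to see that $\tilde\Lambda\cap\pi^{-1}(x)$ is $\tilde\mu_x$-full for $\mu$-a.e.\ $x$, and then select $\sigma(x)$ from that intersection. Second, your appeal to Lemma~\ref{L3} on $X$ fails: condition~(1) of that lemma requires $A(x)F(x)\subset F(f(x))$, but your fast complement $W_i(x)$ is not $A$-invariant on $X$ (as you yourself observe for Lemma~\ref{L5}); likewise a forward-only Lyapunov norm does not control the fast block. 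The repair is to obtain the projection bound upstairs on $\tilde X$, where equivariance does hold, restrict to a large-$\tilde\mu$ set there, and only then project---after which $K(\sigma(x))$ is no longer tempered along $f$-orbits, merely bounded on a large set, but that is all Lemma~\ref{L5} needs. The paper's $L^\infty$ bound from Lemma~\ref{L41} bypasses this entire detour.
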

  
  If $k$ is a finite positive integer, we will show that the previous theorem for every $i=1,2,\cdots,k+1$. If $k$ is infinite, we will prove the same result for every $i\in \mathbb{N}$.
  In the following, we only prove  the  first case,  the latter  can be proven in a similar fashion.

  The following lemma proved in \cite[Lemma 2.11]{Quas12}  allows us to choose a ``good" complementary space for every space $V_{i}(x)$. In the setting of the space $\R^{d}$ or the Hilbert space, one may usually choose the orthogonal complement $V_{i}(x)^{\perp}$.
  
  \begin{lemma} \label{L41}
  	Let the filtration $\mathcal{B}=V_1(x)\supset \cdots \supset V_{k+1}(x)$  be as in Theorem \ref{One MET}. Then, for every $1\leq i \leq k$, there exists a finite-dimensional subspaces $\widetilde{U_i}(x)$ such that
  	\begin{enumerate}
  		\item[(1)] for $\mu$-almost every $x$, $ V_{i+1}(x)\oplus \widetilde{U_i}(x)=V_{i}(x)$ and the map $x\mapsto  \widetilde{U_i}(x)$ is measurable;
  		\item[(2)] let $U_i(x)=\bigoplus_{k=0}^{i-1}\widetilde{U_k}(x)$ (where $ \widetilde{U_0}(x)=\{0\} $), and let $\pi_{i}^{u}(x)$ and $\pi_{i}^{v}(x)$ denote the projections of $\cB$ onto $U_i(x)$ and $V_{i}(x)$ via the splitting $V_{i}(x)\oplus U_i(x)$ respectively, then
  		$$||\pi_{i}^{v}(\cdot)||,||\pi_{i}^{u}(\cdot)||\in L^{\infty}(\mu).$$
    \end{enumerate}	
  \end{lemma}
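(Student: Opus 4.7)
The plan is to mirror the proof of Theorem~\ref{main}, working with the filtration $V_i(x)$ and its finite-dimensional complement $U_i(x)$ supplied by Lemma~\ref{L41}, but using only forward-time estimates because neither $f$ nor $A(\cdot)$ is assumed invertible. The central task is to set up a one-sided analogue of Theorem~\ref{T2} on a compact regular set; once this is done, Lemma~\ref{L5} applies verbatim (it already uses only the forward iterates $A_n, B_n$) and delivers the H\"older estimate just as in the proof of Lemma~\ref{H-}.

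Fix $i\in\{1,\ldots,k\}$ and build a one-sided Lyapunov norm adapted to $\mathcal{B}=U_{i+1}(x)\oplus V_{i+1}(x)$. On $V_{i+1}(x)$ set $\|v\|_x=\sum_{n\ge 0} e^{-n(\lambda_{i+1}+\varepsilon)}\|\cA(x,n)v\|$; convergence is guaranteed by $\limsup\frac{1}{n}\log\|\cA(x,n)|_{V_{i+1}(x)}\|\le\lambda_{i+1}$ from Theorem~\ref{One MET}. On each $\widetilde{U}_k(x)$ with $1\le k\le i$ set $\|u\|_x=\sum_{n\ge 0} e^{-n(\lambda_k+\varepsilon)}\|\cA(x,n)u\|$, which converges since vectors there grow at the exact rate $\lambda_k$. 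Extend to $\mathcal{B}$ via the direct sum. Following the one-sided variant of the construction in Doan (Theorem 7.2.3), this yields $\|\cdot\|\le\|\cdot\|_x\le D_\varepsilon(x)\|\cdot\|$ with $D_\varepsilon$ tempered along forward orbits. Combining with the $L^\infty$ bounds on $\|\pi_{i+1}^u\|$ and $\|\pi_{i+1}^v\|$ furnished by Lemma~\ref{L41}(2), the compact regular sets $\Lambda_\ell=\{x: D_\varepsilon(x)\le\ell,\ \|\pi_{i+1}^u(x)\|\le\ell,\ \|\pi_{i+1}^v(x)\|\le\ell\}$ are increasing with $\mu(\Lambda_\ell)\to 1$, so any $\gamma>0$ can be matched by choosing $\ell$ large enough.

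I would then derive the one-sided analogue of Theorem~\ref{T2}: for every $x\in\Lambda_\ell$ and every $n\ge 0$, (1) $\|\cA(x,n)v\|\le\ell e^{(\lambda_{i+1}+\varepsilon)n}\|v\|$ for $v\in V_{i+1}(x)$, (2) $\|\cA(x,n)u\|\ge\ell^{-1}e^{(\lambda_i-\varepsilon)n}\|u\|$ for $u\in U_{i+1}(x)$, and (3) $\max\{\|u\|,\|v\|\}\le\ell\|u+v\|$ for $u\in U_{i+1}(x)$ and $v\in V_{i+1}(x)$. Items (1) and (3) are immediate from the Lyapunov norm and the projection bounds. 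For (2) I would pass to the induced cocycle on the quotient $\mathcal{B}/V_{i+1}(x)$: this is a finite-dimensional cocycle with Oseledets exponents $\lambda_1>\cdots>\lambda_i$, all strictly greater than $\lambda_{i+1}$; its injectivity is immediate since any non-trivial kernel element would have growth rate $\le\lambda_{i+1}$, contradicting the exact growth rate $\lambda_k$ of non-zero vectors in $\widetilde{U}_k(x)$ from Theorem~\ref{One MET}. Standard Pesin lower bounds for this invertible finite-dimensional cocycle, transferred back to $U_{i+1}(x)$ via the isomorphism $U_{i+1}(x)\cong\mathcal{B}/V_{i+1}(x)$ with distortion controlled by $\|\pi_{i+1}^u(x)\|\le\ell$, yield (2). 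With (1)--(3) in hand, Lemma~\ref{L4} supplies $\|\cA(x,n)-\cA(y,n)\|\le a^n d(x,y)^\nu$, and Lemma~\ref{L5} applied with $\alpha_1=e^{\lambda_i-\varepsilon}$, $\alpha_2=e^{\lambda_{i+1}+\varepsilon}$, $E=V_{i+1}(x)$, $F=V_{i+1}(y)$, $E'=U_{i+1}(x)$, $F'=U_{i+1}(y)$ produces $\hat{d}(V_{i+1}(x),V_{i+1}(y))\le C_i\, d(x,y)^{\nu_i}$ for $x,y\in\Lambda_\ell$ with $d(x,y)$ small, exactly as in Lemma~\ref{H-}.

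The main obstacle is step (2): producing a uniform forward lower expansion bound on the finite-dimensional complement $U_{i+1}(x)$. In the invertible case (Theorem~\ref{T2}) this came for free by applying the Lyapunov norm to $\cA(x,-n)$, which is not available here. Executing the reduction to the induced quotient cocycle requires checking carefully that its finite-dimensional Oseledets/Pesin theory can be controlled uniformly by the single regular parameter $\ell$, and that the quotient norm on $\mathcal{B}/V_{i+1}(x)$ is comparable to the restriction of $\|\cdot\|$ to $U_{i+1}(x)$ with constants depending only on $\|\pi_{i+1}^u(x)\|$. Once this step is secured, the remainder is a direct one-sided re-run of the argument for Theorem~\ref{main}, and the compact set $\Lambda_\ell$ plays the role of $\Lambda_\gamma$ in the statement.
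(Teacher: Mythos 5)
Your proposal does not prove the statement it was asked to prove. Lemma \ref{L41} is precisely the assertion that a finite-dimensional complement $\widetilde{U_i}(x)$ of $V_{i+1}(x)$ inside $V_i(x)$ can be chosen so that $x\mapsto \widetilde{U_i}(x)$ is measurable and the associated projections $\pi_i^u(x),\pi_i^v(x)$ are essentially bounded. Your argument takes exactly these two facts as inputs (``the finite-dimensional complement $U_i(x)$ supplied by Lemma \ref{L41}'', ``the $L^\infty$ bounds \ldots furnished by Lemma \ref{L41}(2)'') and then runs a one-sided version of the regular-set/graph-transform machinery of Theorem \ref{T2} and Lemma \ref{L5} to obtain H\"older continuity of $x\mapsto V_i(x)$. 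That is a sketch of Theorem \ref{main2}, the downstream result, and it is circular as a proof of Lemma \ref{L41} itself: nothing in your text constructs $\widetilde{U_i}(x)$, verifies its measurability, or bounds the projection norms.

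What is actually needed, and what the paper does not reprove but imports from Gonz\'alez-Tokman and Quas \cite[Lemma 2.11]{Quas12}, is nontrivial in the Banach-space setting: since there is no orthogonal complement, one must make a measurable choice of a finite-dimensional complement of the finite-codimensional space $V_{i+1}(x)$ inside $V_i(x)$ (a measurable selection argument), and the choice must be ``good'' in the sense that the norms of the induced projections are bounded uniformly in $x$ --- for instance by selecting the complement via an Auerbach-type basis so that the projection norm is controlled by a constant depending only on the codimension $m_i$, which then gives the $L^\infty$ bound in part (2). None of these steps appear in your proposal; the Lyapunov-norm and quotient-cocycle considerations you describe (including the lower expansion bound on $U_{i+1}(x)$, which in the paper is handled separately via the induced cocycle $B_n(x)$ and Lemma \ref{L42}) are relevant to Theorem \ref{main2} but are beside the point for the lemma in question.
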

 Since $V_1(x)=\mathcal{B}$,  the statement in Theorem \ref{main2} clearly holds.  In the following, fix $i\in \{2\cdots,k+1\}$.
  By (2) of Lemma \ref{L41}, on can choose $\ell>0$, such that 
  $$ \max \{||\pi_{i}^{u}(x)||,||\pi_{i}^{v}(x)||\}\leq \ell$$
  for $\mu$-almost every $x\in X$.
  Reducing $X_0$ by a zero measure set such that $X_0$ remains $f$-invariant and for every $x\in X_0$ the above inequality holds for every $x\in X_0$. Then, for every $x\in X_0$, one has
  \begin{equation}\label{eq:Onenorm}
  \max\{||u||,||v||\}\leq \ell ||u+v||
  \end{equation}
  for every $u\in U_{i}(x)$ and every $v\in V_i(x)$. By Theorem \ref{One MET},  for every $x\in X_0$ and every $u\in U_{i}(x)\setminus \{0\}$ we have that 
  \begin{equation}\label{eq:ui}
    \lim_{n\rightarrow \infty}\frac{1}{n} \log||\cA(x,n) u||\geq \lambda_{i-1}.
  \end{equation}

  Fix $x\in X_0$, let $B(x)=\pi^{u}_{i}(f(x))\circ A(x)\circ\pi^{u}_{i}(x)$, $C(x)=\pi^{v}_{i}(f(x))\circ A(x)\circ \pi^{u}_{i}(x)$ and $D(x)=\pi^{v}_{i}(f(x))\circ A(x)\circ \pi^{v}_{i}(x)$. Since $\pi^{u}_{i}(f(x))\circ A(x)\circ \pi^{v}_{i}(x)=0$ by the  invariance of $V_i(x)$,  we have that
  $$A(x)=B(x)+C(x)+D(x).$$
  Similarly, define $B_{n}(x)$, $C_{n}(x)$ and $D_{n}(x)$ as  above, such that $\cA(x,n)=B_n(x)+C_n(x)+D_n(x)$. As in \cite[Lemma 2.12]{Quas12} ( see also  \cite[Sect. 4.2.5]{Viana 14}),  One can show that
  \begin{align*}
  B_n(x)=B(f^{n-1}(x))\circ \cdots \circ B(x),\,
  D_n(x)=D(f^{n-1}(x))\circ \cdots \circ D(x)
  \end{align*}  and
  $$C_n(x)=\sum_{j=0}^{n-1} D_{n-j-1}(f^{j+1}(x)) C(f^{j}(x)) B_j(x).$$
  Note that $B(x)|_{U_i(x)}:U_{i}(x)\rightarrow U_{i}(f(x))$, $D(x)|_{V_i(x)}=A(x)|_{V_i(x)}$ and $C(x)|_{U_i(x)}:U_{i}(x)\rightarrow V_{i}(f(x))$. For every $u\in U_{i}(x)$, by (\ref{eq:Onenorm}) we have that
  $$\max \{||B(x)u||,||C(x)u||\}\leq \ell ||B(x)u+C(x)u||=\ell ||A(x)u||.$$
  This implies that $$\max\{||C(x)||,||B(x)||\}\leq \ell||A(x)||.$$
   Moreover, if there exists  $u\in U_{i}(x)\setminus \{0\}$ such that $B(x)u=0$, then 
  $$\cA(x,n)u=\cA(fx,n-1)A(x)u=\cA(fx,n-1)C(x)u$$  for every $n>1$.
  Since $C(x)u\in V_{i}(f(x))$, one has 
  $$\limsup_{n\rightarrow \infty}\frac{1}{n}\log||\cA(x,n)u|| \le \lambda_{i}.$$ 
  This yields a contradiction with (\ref{eq:ui}). Hence,  one has $B(x)u\neq 0$ for each $u\in U_{i}(x)\setminus \{0\}$. Therefore, $B(x)$ is an isomorphism from $U_i(x)$ to $U_i(fx)$.  Note that $\dim U_i(x)=\dim U_i(fx)<\infty$, restricted on $\{U_i(x)\}$,  $\{B_n(x)\}$ is a cocycle on $X$ with respect to $f$,  applying the multiplicative ergodic theorem of the finite dimensional case (e.g., see \cite{Viana 14}), one can obtain that for $\mu$-almost every $x\in X$
  the  following limit 
  \begin{eqnarray*}
  \lim_{n\rightarrow \infty} \frac{1}{n}\log||B_n(x)u||
  \end{eqnarray*}
   exist for every $u\in U_i(x)\setminus\{0\}$. Moreover, by \cite[Sub-lemma 2.13]{Quas12} or \cite[Proposition 4.14]{Viana 14}, we have that for every $u\in U_i(x)\setminus\{0\}$
  $$\lim_{n\rightarrow \infty} \frac{1}{n}\log||B_n(x)u||=\lim_{n\rightarrow \infty} \frac{1}{n}\log||\cA(x,n)u|| \geq \lambda_{i-1}.$$
  This implies that 
  \begin{equation}\label{Blim}
  \lim_{n\rightarrow \infty} \frac{1}{n}\log||(B_n(x)|_{U_{i}(x)})^{-1}||^{-1}\geq \lambda_{i-1},\,\, \mu\text{-almost every } x.
  \end{equation}
 Without loss of generality, assume that \eqref{Blim} holds for every $x\in X_0$.
\begin{lemma}\label{L42}
	Fix $x\in X_0$ and a small number $\varepsilon>0$, there exists  $m>0$ such that for each $v\in U_{i}(x)\setminus\{0\}$ with $||v||=1$, we have $||\cA(x,n) v||\geq e^{n(\lambda_{i-1}-\varepsilon)}||v||$ for each $n\geq m$.
\end{lemma}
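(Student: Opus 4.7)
The plan is to exploit the decomposition $\cA(x,n)=B_n(x)+C_n(x)+D_n(x)$ together with the already‑established isomorphism property of $B_n(x)$ on $U_i(x)$, and then invoke the lower bound \eqref{Blim} to obtain the desired exponential growth. The projection bound \eqref{eq:Onenorm} will let us pass from a lower bound on $\|B_n(x)v\|$ to a lower bound on $\|\cA(x,n)v\|$.

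First I would observe that for any $v\in U_i(x)$, the image $B_n(x)v$ lies in $U_i(f^n(x))$ while $C_n(x)v$ lies in $V_i(f^n(x))$, and $D_n(x)v=0$ (since $D(\cdot)$ vanishes on $U_i(\cdot)$). Thus $\cA(x,n)v = B_n(x)v + C_n(x)v$ with the two summands lying in the complementary subspaces $U_i(f^n(x))$ and $V_i(f^n(x))$. Applying \eqref{eq:Onenorm} at the point $f^n(x)\in X_0$ yields
\begin{equation*}
\|B_n(x)v\|\le \ell\,\|\cA(x,n)v\|,
\end{equation*}
so that
\begin{equation*}
\|\cA(x,n)v\|\ge \ell^{-1}\|B_n(x)v\|.
\end{equation*}

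Next, since $B_n(x)|_{U_i(x)}:U_i(x)\to U_i(f^n(x))$ is an isomorphism, for every $v\in U_i(x)\setminus\{0\}$ one has
\begin{equation*}
\|v\|=\bigl\|(B_n(x)|_{U_i(x)})^{-1}B_n(x)v\bigr\|\le \bigl\|(B_n(x)|_{U_i(x)})^{-1}\bigr\|\cdot\|B_n(x)v\|,
\end{equation*}
and consequently
\begin{equation*}
\|\cA(x,n)v\|\ge \ell^{-1}\bigl\|(B_n(x)|_{U_i(x)})^{-1}\bigr\|^{-1}\|v\|.
\end{equation*}
Now \eqref{Blim} guarantees that for the fixed $x\in X_0$ and the given $\varepsilon>0$, there exists $m_0\in\N$ such that
\begin{equation*}
\bigl\|(B_n(x)|_{U_i(x)})^{-1}\bigr\|^{-1}\ge e^{n(\lambda_{i-1}-\varepsilon/2)}\quad \text{for all } n\ge m_0.
\end{equation*}

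Combining the last two displays gives $\|\cA(x,n)v\|\ge \ell^{-1}e^{n(\lambda_{i-1}-\varepsilon/2)}\|v\|$ for every $n\ge m_0$. Finally, choose $m\ge m_0$ large enough that $\ell^{-1}\ge e^{-n\varepsilon/2}$ for all $n\ge m$ (which is possible because $\ell$ is a fixed constant independent of $n$); then for every $n\ge m$ and every $v\in U_i(x)\setminus\{0\}$,
\begin{equation*}
\|\cA(x,n)v\|\ge e^{n(\lambda_{i-1}-\varepsilon)}\|v\|,
\end{equation*}
as required. The argument is essentially routine once the decomposition is in place; the only subtle point—and the one I would expect to be the main obstacle if the preliminaries were not already provided—is justifying that $B_n(x)|_{U_i(x)}$ is an isomorphism and that its exponential lower growth rate is at least $\lambda_{i-1}$, but both facts are recorded just before the statement of the lemma.
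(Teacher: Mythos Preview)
Your proof is correct and follows essentially the same route as the paper's: both use the decomposition $\cA(x,n)v=B_n(x)v+C_n(x)v$ on $U_i(x)$, apply \eqref{eq:Onenorm} at $f^n(x)$ to bound $\|\cA(x,n)v\|$ below by $\ell^{-1}\|B_n(x)v\|$, and then invoke \eqref{Blim}. The only cosmetic difference is that you absorb the constant $\ell^{-1}$ explicitly via an $\varepsilon/2$ argument, whereas the paper simply records the resulting $\liminf$ inequality and declares the conclusion immediate.
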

\begin{proof}
    For every $u\in U_{i}(x)\setminus\{0\}$, write $\cA(x,n)u=B_n(x)u+C_n(x)u$. Note that $B_n(x)u\in U_i(f^{n}x)$ and $C_n(x)u\in V_i(f^{n}x)$, it follows from \eqref{eq:Onenorm} that
    $$||\cA(x,n)u||\geq \ell^{-1} ||B_n(x)u||\geq \ell^{-1}||(B_n(x)|_{U_{i}(x)})^{-1}||^{-1}||u||.$$
    Therefore, we have that
    $$\liminf_{n\rightarrow \infty}\frac{1}{n}\inf_{\substack{u\in U_{i}(x)\\ ||u||=1}}\log||\cA(x,n)u||\geq \lim_{n\rightarrow \infty} \frac{1}{n}\log||(B_n(x)|_{U_{i}(x)})^{-1}||^{-1}\geq \lambda_{i-1}.$$
    This implies the desired result immediately. 
\end{proof}

 Next, we will show that the map $x\mapsto V_{i}(x)$ is (locally) H\"{o}lder continuous on a compact subset of arbitrarily large measure.

  \begin{proof}[Proof of Theorem \ref{main2}] Fix $i\in\{2,\cdots,k+1\}$ and a sufficiently small number $\varepsilon>0$. For each $n\in \mathbb{N}$, let
  \begin{align*}
  A^{i}_{n,\varepsilon}=\{x\in X_0: ||\cA(x,m) u||\leq e^{m(\lambda_{i}+\varepsilon)}||u||,  \quad \forall u \in  V_{i}(x), \ \forall m\geq n\}
  \end{align*}
  and 
  \begin{align*}
  B^{i}_{n,\varepsilon}=\{x\in X_0: ||\cA(x,m) v||\geq e^{m(\lambda_{i-1}-\varepsilon)}||v||,  \quad  \forall v \in  U_{i}(x),\  \forall m\geq n\}.
  \end{align*}
  Clearly, the sequences of sets $\{A^{i}_{n,\varepsilon}\}$ and $\{B^{i}_{n,\varepsilon}\}$ are nested, and by Theorem \ref{One MET} and Lemma \ref{L42} we have that
  $$\mu\Big(\bigcup_{n=1}^{\infty} A^{i}_{n,\varepsilon}\Big)=\mu\Big(\bigcup_{n=0}^{\infty} B^{i}_{n,\varepsilon}\Big)=1.$$ 
  Therefore, for every $\gamma>0$ there exists $n_0$ such that $\mu (A^{i}_{n,\varepsilon}\cap  B^{i}_{n,\varepsilon})>1-\gamma$ for every $n\geq n_0$. 

  Let $\Lambda_{\gamma}=A^{i}_{n_0,\varepsilon}\cap  B^{i}_{n_0,\varepsilon}$. We may assume further that $\Lambda_{\gamma}$ is compact since otherwise we can
approximate it from within by a compact subset. For every $x,y\in \Lambda_{\gamma}$ with $d(x,y)^{\nu}<(\frac{e^{\lambda_{i}+\varepsilon}}{a})^{n_0}<1$, where  $a$ is a sufficiently large constant  as in Lemma \ref{L4}. Note that Lemma \ref{L4} is also valid for $n>0$ in this case. Applying Lemma \ref{L5} with $A_n=\cA(x,n), B_n=\cA(y,n), \alpha_{2}=e^{\lambda_{i}+\varepsilon},\alpha_{1}=e^{\lambda_{i-1}-\varepsilon}, E=V_i(x),F=V_i(y),{E}'=U_i(x)$ and ${F}'=U_i(y)$. By the construction of the set $\Lambda_{\gamma}$, for every $n\geq n_0$, the conditions (i), (ii) and (iii) of Lemma \ref{L5} hold. Set $\delta=d(x,y)^{\nu}$,  there exists a number ${n}'\geq n_0$ such that
  $$(\frac {e^{\lambda_{i}+\varepsilon}} {a})^{{n}'+1}\le \delta < (\frac {e^{\lambda_{i}+\varepsilon}}{a})^{{n}'}.$$
  It follows from Lemma \ref{L5} that
  \begin{equation*}
  \hat{d}(V_i(x),V_i(y))\leq C_{i} d(x,y)^{\nu_{i}}
  \end{equation*}
  where $C_{i}=(4+2\ell)\ell^2e^{\lambda_{i-1}-\lambda_{i}-2\varepsilon}$ and $\nu_{i}^{-}=\nu(\lambda_{i-1}-\lambda_{i}-2\varepsilon)/(\log a-\lambda_{i}-\varepsilon )<\nu<1.$
  This completes the proof.
  \end{proof}
	


	

\end{document}